\numberwithin{equation}{section}
\theoremstyle{definition}
\newtheorem{definition}{Definition}[section]
\newtheorem{remark}[definition]{Remark}
\newtheorem{custom}[definition]{{}}
\theoremstyle{plain}
\newtheorem{theorem}[definition]{Theorem}
\newtheorem{maintheorem}[definition]{Main Theorem}
\newtheorem{lemma}[definition]{Lemma}
\newtheorem{proposition}[definition]{Proposition}
\newtheorem{result}[definition]{Result}
\newcommand{\CC}{\mathbb{C}^2}
\newcommand{\Cn}{\mathbb{C}^n}
\newcommand{\cplx}{\mathbb{C}}
\newcommand{\RR}{\mathbb{R}}
\newcommand{\tor}{\mathbb{T}}
\newcommand{\bdy}{\partial}
\newcommand{\OM}{\Omega}
\newcommand{\Dsc}{\mathbb{D}}
\newcommand\scone[2]{\boldsymbol{\mathcal{K}}({#1};{#2})}
\newcommand{\clcone}{\boldsymbol{\overline{\mathcal{K}}}}
\newcommand{\clBcone}{\boldsymbol{\overline{\mathcal{K}}_*}}
\newcommand\smcone[2]{\overline{\boldsymbol{\mathcal{K}}({#1}};{#2})}
\newcommand{\weg}{\boldsymbol{\mathscr{W}}}
\newcommand{\clweg}{\boldsymbol{\overline{\mathscr{W}}}}
\newcommand{\smoo}{\mathcal{C}}
\newcommand{\hol}{\mathcal{O}}
\newcommand{\psh}{{\sf psh}}
\newcommand{\eps}{\varepsilon}
\newcommand{\eh}{\varkappa}
\newcommand{\zt}{\zeta}
\newcommand{\zbar}{\overline{z}}
\newcommand{\wbar}{\overline{w}}
\newcommand{\ztbar}{\overline{\zeta}}
\newcommand{\tht}{\theta}
\newcommand{\Lam}{\Lambda}
\newcommand{\zahl}{\mathbb{Z}}
\newcommand{\Nat}{\mathbb{N}}
\newcommand{\er}{\mathfrak{Re}}
\newcommand{\rDim}{{\rm dim}_{\mathbb{R}}}
\newcommand{\funz}{{\sf F}(r;}
\newcommand{\hess}{\mathfrak{H}_{\mathbb{C}}}
\newcommand{\cplxlns}{\boldsymbol{{\sf L}}}
\newcommand{\exepc}{\boldsymbol{\mathcal{E}}}
\newcommand{\dil}{\mathscr{D}}
\newcommand{\ev}{\mu}
\newcommand{\zerov}{\boldsymbol{0}}
\newcommand{\indx}{\mathscr{I}}
\newcommand{\Dis}{{\sf dsc}}
\newcommand{\spn}{{\sf span}}
\newcommand{\lrarw}{\longrightarrow}
\newcommand\levi[1]{\mathfrak{L}{#1}}
\newcommand{\cpln}{\varLambda}
\newcommand{\jayP}{\widetilde{{}^j P}}
\newcommand\rg[1]{{\sf Arg}({#1})}
\newcommand\LeviDeg[1]{\omega({#1})}
\newcommand\sig[1]{\widetilde{\sigma_{#1}}}
\newcommand{\lcm}{{\rm lcm}}
\newcommand{\bcdot}{\boldsymbol{\cdot}}
\newcommand{\hyprs}{\boldsymbol{H}}
\newcommand{\dbar}{\overline{\partial}}
\newcommand\mixderiv[3]{\frac{\partial^2{#3}}{\partial{#1}\partial\overline{{#2}}}}
\newcommand\deriv[3]{\frac{\partial^{{#1}}{#2}}{\partial{#3}}}
\newcommand\mixRderiv[3]{\frac{\partial^2{#3}}{\partial{#1}\partial{#2}}}
\newcommand\Mixdrv[2]{\partial^2_{{#1}\overline{{#2}}}}
\newcommand\monodrv[1]{\partial_{{#1}}}
\newcommand{\laplc}{\bigtriangleup}
\begin{document}

\title[Plurisubharmonic polynomials and bumping]{Plurisubharmonic polynomials and bumping} 

\author{Gautam Bharali}
\address{Department of Mathematics, Indian Institute of Science, Bangalore - 560012}
\email{bharali@math.iisc.ernet.in}
\author{Berit Stens{\o}nes}
\address{Department of Mathematics, University of Michigan, 530 Church Stret, Ann Arbor, MI
48109}
\email{berit@umich.edu}
\thanks{The first-named author is supported by a grant from the UGC under DSA-SAP, Phase~IV}
\keywords{Bumping, finite-type domain, plurisubharmonic function, weighted-homogeneous function}
\subjclass[2000]{Primary 32F05, 32T25}

\begin{abstract}
We wish to study the problem of bumping outwards a pseudoconvex,
finite-type domain $\OM\subset\Cn$ in such a way that pseudoconvexity 
is preserved and such that the lowest possible orders of contact of the bumped domain 
with $\bdy\OM$, at the site of the bumping, are explicitly realised. Generally,
when $\OM\subset\cplx^n, \ n\geq 3$, the known methods lead to
bumpings with high orders of contact --- which are not explicitly known
either --- at the site of the bumping. Precise orders are known for 
$h$-extendible/semiregular domains. This paper is motivated by certain 
families of {\em non-semiregular} domains in $\mathbb{C}^3$. These families are 
identified by the behaviour of the least-weight plurisubharmonic polynomial 
in the Catlin normal form. Accordingly, we study how to perturb certain 
homogeneous plurisubharmonic polynomials without destroying plurisubharmonicity.
\end{abstract}

\maketitle

\section{Introduction}\label{S:intro}

This paper is a part of a study of the boundary-geometry of bounded pseudoconvex domains of 
finite type. For such a domain in $\cplx^2$, one can demonstrate many nice properties
that have major function-theoretic consequences. For example, Bedford and Fornaess
\cite{bedfordFornaess:cpfwpd78} showed that every boundary point of 
such a domain with {\em real-analytic}
boundary admits a holomorphic peak function. A similar conclusion was obtained by Fornaess and
Sibony \cite{fornaessSibony:cpfwpd89} in the finite-type case. Later, Fornaess \cite{fornaess:sneDbC286}
and Range \cite{range:ikHeDbpdftC290} exploited a crucial ingredient needed in both 
\cite{bedfordFornaess:cpfwpd78} and \cite{fornaessSibony:cpfwpd89} to obtain H{\"o}lder estimates
for the $\dbar$-problem.
\smallskip

All of the mentioned results depend on the fact that the given domain in $\cplx^2$ has a good
local bumping. If $\OM\subset\Cn, \ n\geq 2$, is a smoothly bounded pseudoconvex domain 
and $\zt\in\bdy\OM$, we say that {\em $\OM$ admits a local bumping around $\zt$} if we can find 
a neighbourhood $U_\zt$ of $\zt$ and a smooth function
$\rho_\zt\in\psh(U_\zt)$ such that
\begin{itemize}
\item[i)] $\rho_\zt^{-1}\{0\}$ is a smooth hypersurface in $U_\zt$ that is pseudoconvex from the
side $U^{-}_\zt:=\{z:\rho_\zt(z)<0\}$; and
\item[ii)] $\rho_\zt(\zt)=0$, but 
$(\overline{\OM}\setminus\{\zt\})\bigcap U_\zt\varsubsetneq U^{-}_\zt$.
\end{itemize}
We shall call the triple $(\bdy\OM,U_\zt,\rho_\zt)$ a local bumping of $\OM$ around 
$\zt$. 
\smallskip

Diederich and Fornaess \cite{diederichFornaess:phmpdrab79} did show that if $\Omega$ is a bounded,
pseudoconvex domain with real-analytic boundary, then local bumpings always exist around each
$\zt\in\bdy\OM$. The problem however --- from the viewpoint of the applications mentioned above --- is
that the order of contact between $\bdy\OM$ and $\rho_\zt^{-1}\{0\}$ at $\zt$ might be very high. In 
fact, if $\OM\subset\Cn, \ n\geq 3$, then this order of contact is much higher, in many cases, than 
the type of the point $\zt\in\bdy\OM$. (See Catlin's \cite{catlin:sedbarNpd87} and D'Angelo's
\cite{d'angelo:rhoca82} for different notions of type.) Given these function-theoretic motivations,
one would like to attempt to solve the following problem:
\begin{itemize}
\item[(*)] With $\OM$ and $\zt\in\bdy\OM$ as above, construct a local bumping 
$(\bdy\OM,U_\zt,\rho_\zt)$ such that the orders of contact of $\bdy\OM\bigcap U_\zt$ with
$\rho_{\zt}^{-1}\{0\}$ at $\zt$ along various directions $V\in T_\zt(\bdy\OM)\bigcap iT_\zt(\bdy\OM)$
are the {\em lowest possible} and {\em explicitly known.}
\end{itemize}
\smallskip

In the absence of any convexity near $\zt\in\bdy\OM$, we are led to consider the following
situation: given $\OM$ and $\zt\in\bdy\OM$ as above, we can find a
local holomorphic coordinate system $(V_\zt;w,z_1,\dots,z_{n-1})$, centered at $\zt$, such that
\[
\OM\bigcap V_\zt \ = \ \left\{(w,z)\in V_\zt:\er(w)+P_{2k}(z)+
O(|z|^{2k+1}, |wz|, |w|^2)<0\right\},
\]
where $P_{2k}$ is a plurisubharmonic polynomial in $\cplx^{n-1}$ that is homogeneous of degree
$2k$. The first result in $\Cn, \ n\geq 3$, to address (*) is due to Noell \cite{Noell:pfpd93}.
He showed that if $P_{2k}$ is plurisubharmonic and is not harmonic along any complex line through
$0$, then $\OM$ can be bumped homogeneously to order $2k$ around $0\in\Cn$. So, one would like to
know whether some form of Noell's result holds {\em without the ``nonharmonicity'' assumption
on $P_{2k}$}. It is worth noting here that if $\bdy\OM$ is either convex or lineally convex near
$\zt$, then this additional property yields a local holomorphic coordinate system
$(V_\zt;w,z_1,\dots,z_{n-1})$ such that (*) is satisfactorily solved. This is the content of
the Diederich-Fornaess papers \cite{diederichFornaess:sfcdft99} and  
\cite{diederichFornaess:lcdft:hsf99}.
\smallskip 

A clearer connection between (*) and the boundary-geometry emerges when we look at the
Catlin normal form for $\bdy\OM$ near a finite-type $\zt\in\bdy\OM$. If $(1,m_1,\dots,m_{n-1})$ is 
the Catlin multitype of $\zt$ (readers are once more referred to \cite{catlin:sedbarNpd87}), and
we write $\Lam:=(m_1,\dots,m_{n-1})$, then, there exists a local holomorphic coordinate
system $(V_\zt;w,z_1,\dots,z_{n-1})$, centered at $\zt$, such that
\begin{multline}\label{E:localdef}
\OM\bigcap V_\zt \\
 = \ \left\{(w,z)\in V_\zt:\er(w)+P(z)+O(|wz|,|w|^2)+(\text{\em higher-weight terms in $z$})<0\right\},
\end{multline}
where $P$ is a $\Lam$-homogeneous plurisubharmonic polynomial in $\cplx^{n-1}$ that has no 
pluriharmonic terms. We say that $P$ is {\em $\Lam$-homogeneous} if
$P(t^{1/m_1}z_1,\dots,t^{1/m_{n-1}}z_n)=tP(z_1,\dots,z_{n-1})$ 
$\forall z=(z_1,\dots,z_{n-1})\in\cplx^{n-1}$ and for every $t>0$. Note that (*) would be completely
solved if one could prove the existence of a $\Lam$-homogeneous
function $H\in\smoo^\infty(\cplx^{n-1})$ satisfying
\[
H(z) \ \geq \ C\sum_{j=1}^{n-1}|z_j|^{m_j} \quad\forall z\in\cplx^{n-1},
\]
for some $C>0$, such that $(P-H)$ is strictly plurisubharmonic on $\cplx^{n-1}\setminus\{0\}$.
Unfortunately, this plan does not work in general. Yu in \cite{yu:pfwpd94}, and 
Diederich and Herbort in \cite{diederichHerbort:pdst94}, have independently shown that
\begin{itemize}
\item[(**)] A $\Lam$-homogeneous $H$ of the sort described above exists only if there are
no complex-analytic subvarieties of $\cplx^{n-1}$ of positive dimension along which $P$ is 
harmonic.
\end{itemize}
\smallskip

There are certainly domains in $\cplx^3$ for which the condition in (**) fails. Our paper is
inspired by the following examples where that condition fails:
\smallskip

\noindent{{\em Example~1:}
\[
\OM_1 \ = \ \left\{(w,z)\in\cplx\times\CC:\er(w)+|z_1|^6|z_2|^2+|z_1|^8+
		\tfrac{15}{7}|z_1|^2\er(z_1^6)+|z_2|^{10}<0\right\},
\]
and}

\noindent{{\em Example~2:}
\[
\OM_2 \ = \ \left\{(w,z)\in\cplx\times\CC:\er(w)+|z_1z_2|^8+
                \tfrac{15}{7}|z_1z_2|^2\er(z_1^6z_2^6)+|z_1|^{18}+|z_2|^{20}<0\right\}.
\]
The difficulty in achieving (*), in either case, is the existence of complex lines in $\CC$ along
which $P_{2k}$ (in the notation used earlier) is harmonic. Note that the defining functions of
$\OM_1$ and $\OM_2$ are modelled on the polynomial}
\[
G(z) \ = \ |z|^8+\tfrac{15}{7}|z|^2\er(z^6),
\]
that features in the well-known Kohn-Nirenberg example. Recalling the behaviour of $G$, we note that
this {\em rules out the possibility of bumping $\OM_j, \ j=1,2$, by perturbing the higher-order terms} 
in $(z_1,z_2)$.
\smallskip

This last remark suggests that we are committed to perturbing the lowest-order --- or, more
generally, the lowest-weight --- polynomial in $z$ in the defining function of~\eqref{E:localdef}.
This is the first step towards obtaining the possible bumpings of (*). This, in itself, is 
difficult because we are treating
the case where the polynomial $P$ (in the terminology of~\eqref{E:localdef}) is harmonic along 
certain complex subvarieties of $\cplx^{n-1}$. The structure of these exceptional varieties can
be {\em very difficult} to resolve. However, we can handle a large class of polynomials associated
to domains in $\cplx^3$, which includes {\em Example~1} and {\em Example~2}. To obtain the possible 
bumpings, we 
need to construct a non-negative, $\Lam$-homogeneous function $H\in\smoo^{\infty}(\CC)$ such that
\[
P(z_1,z_2)-H(z_1,z_2) \;\; \text{\em is plurisubharmonic $\forall(z_1,z_2)$},
\]
and such that
\[
H(z_1,z_2) \ \geq \ \eps|P(z_1,z_2)| \quad \forall(z_1,z_2)\in\CC
\]
for some $\eps>0$ sufficiently small. That is the focus of this paper. 
The precise results are given in the next section. Since this focused task is
already rather involved, its application to specific function-theoretic estimates will be
tackled in a different article.
\medskip

\section{Statement of results}\label{S:results}

For clarity, we shall {\em initially} present our results in the setting 
of homogeneous plurisubharmonic
polynomials. However, we begin with some notation that is relevant 
to the general setting. Thus, let $P$ be a $(m_1,m_2)$-homogeneous plurisubharmonic
polynomial on $\CC$, and define:
\begin{align}
\LeviDeg{P}  \ &:= \ \{z\in\CC:\hess(P)(z) \ \text{is not strictly positive definite}\},\notag \\
\mathfrak{C}(P) \ &:= \ \text{the set of all irreducible complex curves $V\subset\CC$} \notag \\
&\quad\qquad \text{such that $P$ is harmonic along the smooth part of $V$}, \notag
\end{align}
where $\hess(P)(z)$ denotes the complex Hessian of $P$ at $z\in\CC$. As already mentioned, we
need to tackle the case when $\mathfrak{C}(P)\neq\emptyset$. 
\smallskip

Let us now consider $P$ to be homogeneous of degree $2k$ (plurisubharmonicity ensures that $P$ is of 
even degree). If we assume that 
$\mathfrak{C}(P)\neq\emptyset$, then there is a non-empty collection of complex lines 
through the origin in $\CC$ along which $P$ is harmonic. This follows from the following
observation by Noell:
\smallskip

\begin{result}[Lemma~4.2, \cite{Noell:pfpd93}] Let $P$ be a homogeneous, plurisubharmonic, non-pluriharmonic 
polynomial in $\Cn, \ n\geq 2$. Suppose there exist complex-analytic varieties of positive dimension in $\Cn$
along which $P$ is harmonic. Then, there exist complex lines through the origin in $\Cn$ along which $P$ 
is harmonic.
\end{result} 
\smallskip

\noindent{This collection of complex lines will play a key role. Let us 
denote this non-empty collection of exceptional
complex lines by $\exepc(P)$. What is the structure of $\exepc(P)$~?
An answer to this is provided by the following proposition which is indispensable to
our construction, and which may be of independent interest:}
\smallskip

\begin{proposition}\label{P:harmlines} Let $P$ be a plurisubharmonic, non-pluriharmonic
polynomial in $\CC$ that is homogeneous of degree $2k$. There are at most finitely many complex  
lines passing through $0\in\CC$ along which $P$ is harmonic.
\end{proposition}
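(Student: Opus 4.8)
The plan is to parametrise the lines through the origin by a single complex parameter, to translate harmonicity of $P$ along such a line into polynomial equations in that parameter, and then to exploit the positivity coming from plurisubharmonicity. Every complex line through $0\in\CC$ other than $\ell_\infty:=\{z_1=0\}$ has the form $\ell_\mu:=\{(\lambda,\mu\lambda):\lambda\in\cplx\}$ for a unique $\mu\in\cplx$, so it suffices to bound the number of admissible $\mu$. Write $P=\sum_{r+s=2k}P_{r,s}$, where $P_{r,s}$ is the bihomogeneous summand of bidegree $(r,s)$ in $(z,\bar{z})$; reality gives $\overline{P_{r,s}}=P_{s,r}$ and plurisubharmonicity gives $\hess(P)\geq 0$. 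One computes $P|_{\ell_\mu}(\lambda)=\sum_{r+s=2k}P_{r,s}(1,\mu)\,\lambda^r\bar{\lambda}^s$, so $P$ is harmonic along $\ell_\mu$ iff $P_{r,s}(1,\mu)=0$ for all $r,s\geq 1$, equivalently (using $\hess(P)\geq 0$) iff $\hess(P)(z)(1,\mu)^{\mathsf t}=0$ for every $z\in\ell_\mu$. Putting $\lambda=re^{i\tht}$, subharmonicity of $P|_{\ell_\mu}$ says $\laplc_\lambda\big(P|_{\ell_\mu}\big)=r^{2k-2}\Psi_\mu(\tht)$ for a \emph{nonnegative} trigonometric polynomial $\Psi_\mu$ of mean value $k^2\,P_{k,k}(1,\mu)$; hence $P_{k,k}(1,\mu)=\tfrac1{2\pi}\int_0^{2\pi}P(e^{i\tht},\mu e^{i\tht})\,d\tht\geq P(0)=0$, and $P_{k,k}(1,\mu)=0$ iff $\Psi_\mu\equiv 0$ iff $P$ is harmonic along $\ell_\mu$. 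Thus the exceptional parameters form the set $S:=\{\mu\in\cplx:P_{k,k}(1,\mu)=0\}$, the zero locus of a single nonnegative real polynomial, and the claim reduces to proving $S$ finite.

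Suppose, for contradiction, that $S$ is infinite; then it contains a nonconstant real-analytic arc. For $\mu$ on this arc, $\hess(P)(z)(1,\mu)^{\mathsf t}=0$ for all $z\in\ell_\mu$; substituting $z=(\lambda,\mu\lambda)$, comparing coefficients of $\lambda^p\bar{\lambda}^q$, and noting that the bidegree-$(p,q)$ part of $\hess(P)$ is $\big(\mixderiv{z_i}{z_j}{P_{p+1,q+1}}\big)_{i,j}$, one obtains for each $(p,q)$ with $p+q=2k-2$
\[
\Big(\mixderiv{z_i}{z_1}{P_{p+1,q+1}}\Big)(1,\mu)+\mu\,\Big(\mixderiv{z_i}{z_2}{P_{p+1,q+1}}\Big)(1,\mu)=0\qquad(i=1,2)
\]
for all $\mu$ on the arc. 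For $(p,q)=(2k-2,0)$ the left side is a \emph{holomorphic} polynomial in $\mu$, so it vanishes identically; writing $B_j:=\bdy P_{2k-1,1}/\bdy\bar{z}_j$ and homogenising, this reads $z_1\,\bdy_{z_i}B_1+z_2\,\bdy_{z_i}B_2\equiv 0$, i.e. $\varphi:=z_1B_1+z_2B_2$ satisfies $\bdy_{z_i}\varphi=B_i$, and Euler's relation for the degree-$2k$ form $\varphi$ gives $2k\varphi=z_1B_1+z_2B_2=\varphi$, whence $\varphi\equiv 0$ and $B_1\equiv B_2\equiv 0$. Therefore $P_{2k-1,1}$ is holomorphic, and being of bidegree $(2k-1,1)$ it vanishes; likewise $P_{1,2k-1}\equiv 0$.

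With $P_{2k-1,1}$ gone, the most holomorphic part of $\hess(P)$ now comes from $P_{2k-2,2}$, and the extraction is repeated: the new relation still vanishes on the infinite set $S$ but carries one factor $\bar\mu$, so one eliminates $\bar\mu$ against the conjugate relation (coming from $P_{2,2k-2}$). Either this yields a nonzero holomorphic polynomial vanishing on $S$ --- impossible --- or the elimination degenerates, and it is precisely at such degenerations that the positivity $\hess(P)\geq 0$ must be invoked to rule the degeneration out. Iterating through the bidegrees kills every $P_{r,s}$ with $r,s\geq 1$ and $r\neq s$, so $\hess(P)=\hess(P_{k,k})$; a final argument --- using that $P_{k,k}$ is then a nonnegative plurisubharmonic form vanishing along the infinitely many lines $\ell_\mu$, $\mu\in S$ --- forces $P_{k,k}\equiv 0$, whence $P=2\er(P_{2k,0})$ is pluriharmonic, contrary to hypothesis. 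Hence $S$ is finite, and counting $\ell_\infty$ as well, $P$ is harmonic along only finitely many lines through $0$. The main obstacle is the descent of this last paragraph: organising the iterated elimination cleanly and pinning down exactly how plurisubharmonicity obstructs every degenerate case, including the terminal step for $P_{k,k}$.
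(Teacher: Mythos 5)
Your first paragraph is correct and is exactly the paper's own reduction: plurisubharmonicity makes each slice $P|_{\ell_\mu}$ subharmonic, the circle average picks out $P_{k,k}(1,\mu)$, and the exceptional directions are precisely the zero set $S$ of this single nonnegative real polynomial (this is the paper's $\phi_{kk}$, up to swapping the roles of the coordinates and a factor of $k^2$). The gap is your second paragraph. The descent genuinely works only at its first step, where the bidegree-$(2k-2,0)$ relation is holomorphic in $\mu$ and Euler's identity kills $P_{2k-1,1}$. Already for $P_{2k-2,2}$ the relations on $S$ have the form $f(\mu)+\bar{\mu}\,g(\mu)=0$, and ``eliminate $\bar{\mu}$ against the conjugate relation, with positivity ruling out the degenerations'' is a placeholder, not a proof: you neither exhibit the resultant, nor show it is a nonzero holomorphic polynomial, nor explain how $\hess(P)\geq 0$ is to be used when it degenerates. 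The terminal step is likewise only asserted, and it cannot rest on nonnegativity plus vanishing along infinitely many lines alone: $\left(\er(z_1\bar{z}_2)\right)^2|z_1|^{2k-4}$ is a nonzero, nonnegative bidegree-$(k,k)$ form vanishing on every line $\ell_\mu$ with $\er\mu=0$, so plurisubharmonicity of $P_{k,k}$ must enter in some essential way that you have not specified. You flag all of this yourself as ``the main obstacle''; it is indeed where the proof is missing, and it is far from clear the iteration can be closed as described.

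The idea you are missing --- and the whole content of the paper's proof --- is to apply potential theory to $P_{k,k}(1,\cdot)$ itself rather than descending through the components $P_{r,s}$. Differentiating under the integral sign in $P_{k,k}(1,\mu)=\tfrac{1}{2\pi}\int_0^{2\pi}P(e^{i\tht},\mu e^{i\tht})\,d\tht$ gives $\Mixdrv{\mu}{\mu}\bigl[P_{k,k}(1,\mu)\bigr]=\tfrac{1}{2\pi}\int_0^{2\pi}(\Mixdrv{2}{2}P)(e^{i\tht},\mu e^{i\tht})\,d\tht\geq 0$, so this nonnegative polynomial is \emph{subharmonic} in $\mu$. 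Moreover, since $P$ is not pluriharmonic one may rotate coordinates so that $P|_{\{z_1=0\}}$ is subharmonic and non-harmonic; then the coefficient of $|\mu|^{2k}$ in $P_{k,k}(1,\mu)$ (namely the coefficient of $|z_2|^{2k}$ in $P(0,z_2)$) is strictly positive, so $S$ is compact. If $S$ were infinite it would be a compact one-dimensional real-analytic set, and one obtains a bounded component $D$ of its complement with $P_{k,k}(1,\cdot)>0$ on $D$ and $=0$ on $\bdy D$ --- contradicting the maximum principle for subharmonic functions. This single observation replaces your entire elimination scheme.
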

\smallskip

\noindent{It is not hard to show that the set of complex lines passing through the origin in $\CC$
along which $P$ is harmonic describes a {\em real}-algebraic subset of $\cplx\mathbb{P}^1$.
Thus, it is possible {\em a priori} that the real dimension of this projective set equals $1$.
The non-trivial part of Proposition \ref{P:harmlines} is that this set is in fact 
zero-dimensional.} 
\smallskip

The interpretation of $\exepc(P)$ for $(m_1,m_2)$-homogeneous polynomials, $m_1\neq m_2$, is 
\begin{align}
\exepc(P) \ :=& \ \text{the class of all curves of the form} \notag \\
&\quad \left\{(z_1,z_2):z_1^{m_1/\gcd(m_1,m_2)}=\zt z_2^{m_2/\gcd(m_1,m_2)}\right\}, \notag \\
& \ \text{$\zt\in\cplx_\infty$, along which $P$ is harmonic}\notag
\end{align}
(with the understanding that $\zt=\infty\Rightarrow$ $P$ is harmonic along
$\{(z_1,z_2)\in\CC:z_2=0\}$). Note how $\exepc(P)$ is just a collection of complex lines
when $m_1=m_2=2k$. As $\mathfrak{C}(P)\neq\emptyset$, perturbing $P$ in
the desired manner becomes extremely messy. However, under certain conditions on 
$\LeviDeg{P}$, we can describe the desired bumping in a relatively brief and precise way.
For this, we need one last definition. An {\em $(m_1,m_2)$-wedge in $\CC$} is defined to be a 
set $\weg$ having the property   
that if $(z_1,z_2)\in\weg$, then $(t^{1/m_1}z_1,t^{1/m_2}z_2)\in\weg \ \forall t>0$. The terms
{\em open $(m_1,m_2)$-wedge} and {\em closed $(m_1,m_2)$-wedge} will have the usual meanings.
Note that when $m_1=m_2=2k$ (the homogeneous case), an $(m_1,m_2)$-wedge is simply a cone.
\smallskip

The problem we wish to solve can be resolved very precisely if the Levi-degeneracy set 
$\LeviDeg{P}$ possesses either one of the following properties:
\begin{enumerate}
\item[{}] {\bf{\em Property~(A):}} $\LeviDeg{P}\setminus\bigcup_{C\in\exepc(P)}C$ contains      
no complex subvarieties of positive dimension and is {\em well separated} from
$\bigcup_{C\in\exepc(P)}C$. In precise terms: there is a closed $(m_1,m_2)$-wedge
$\clweg\subset\CC$ that contains $\LeviDeg{P}\setminus\bigcup_{C\in\exepc(P)}C$ and satisfies
${\rm int}(\clweg)\bigcap(\bigcup_{C\in\exepc(P)}C)=\emptyset$.
\item[{}]
\[
OR
\]
\item[{}] {\bf{\em Property~(B):}} There exists an entire function $\mathcal{H}$ such that
$P$ is harmonic along the smooth part of {\em every} level-curve of $\mathcal{H}$, i.e.
$\LeviDeg{P}=\CC$ and is foliated by these level-curves.
\end{enumerate}
\smallskip
Note that, in some sense, Property~(A) and Property~(B) represent the two extremes of the
complex structure within $\LeviDeg{P}$, given that $\mathfrak{C}(P)\neq\emptyset$. Also
note that polynomials $P$ with the property $\exepc(P)=\emptyset$ are just a special case
of Property~(A).
\smallskip

The reader is referred to Section~\ref{S:intro} for an illustration of these properties. 
The set $\LeviDeg{P}$ for {\em Example~1} (resp.~{\em Example~2}) has Property~(A) 
(Property~(B) resp.). 
We feel that our main results --- both statements and proofs --- are clearest in the 
setting of homogeneous polynomials. Thus, we shall first present our
results in this setting. 
\smallskip

\begin{theorem}\label{T:homobump} Let $P(z_1,z_2)$ be a plurisubharmonic polynomial in $\CC$ 
that is homogeneous of degree $2k$ and has no pluriharmonic terms. Assume that $\LeviDeg{P}$
possesses Property~(A). Define  $\exepc(P):= \ $the set of all complex lines passing through 
$0\in\CC$ along which $P$ is harmonic. Then:

\noindent{1) $\exepc(P)$ consists of finitely many complex lines; and}
\smallskip

\noindent{2) There exist a constant $\delta_0>0$ and a $\smoo^\infty$-smooth function $H\geq 0$ 
that is homogeneous of degree $2k$ 
such that the following hold:}
\begin{enumerate}
\item[(a)] $H^{-1}\{0\}=\bigcup_{L\in\exepc(P)}L$.
\item[(b)] $(P-\delta H)\in\psh(\CC)$ and is strictly plurisubharmonic on 
$\CC\setminus\bigcup_{L\in\exepc(P)}L \ \forall\delta\in(0,\delta_0)$.
\end{enumerate}
\end{theorem}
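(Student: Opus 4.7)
Part~(1) is immediate from Proposition~\ref{P:harmlines}: $P$ is plurisubharmonic and, having no pluriharmonic terms and being nontrivial, non-pluriharmonic, so the proposition applies and yields finiteness of $\exepc(P)$.

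For Part~(2), enumerate $\exepc(P)=\{L_1,\dots,L_N\}$ and choose unit linear forms $\ell_j$ with $L_j=\ker\ell_j$. The guiding principle is that along each $L_j$, the only null direction of $\hess(P)$ is the tangent to $L_j$; strict positivity in the normal direction follows from the Taylor expansion $P=P|_{L_j}(z_\parallel)+C_{11}(z_\parallel,\bar z_\parallel)|z_\perp|^2+\cdots$ in coordinates $(z_\parallel,z_\perp)$ adapted to $L_j$, where $P|_{L_j}$ is pluriharmonic (by harmonicity of $P$ on $L_j$) and $C_{11}\geq 0$ (by plurisubharmonicity) will be strictly positive outside $\clweg$ thanks to Property~(A). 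The candidate I would construct is built from local plurisubharmonic models $|\ell_j(z)|^{2k}=|\ell_j^k|^2$ near each $L_j$ --- polynomials that vanish to order $2k$ on $L_j$ and whose Hessian $k^2|\ell_j|^{2k-2}|d\ell_j|^2$ vanishes on $L_j$ for $k\geq 2$ --- blended via a partition of unity $\{\chi_j\}_{j=0}^N$ on $\cplx\mathbb{P}^1$ subordinate to pairwise-disjoint conical neighbourhoods of the $L_j$ (obtainable from Property~(A)), together with a bulk term supported away from the $L_j$. Symbolically,
\[
H(z) \ := \ \sum_{j=1}^{N}\chi_j([z])\,|\ell_j(z)|^{2k}+\eps_0\,\chi_0([z])\,|z|^{2k},
\]
with $\eps_0$ small, gives a homogeneous degree-$2k$ non-negative function vanishing precisely on $\bigcup L_j$.

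Plurisubharmonicity of $P-\delta H$ is then verified on the unit sphere $S^3\subset\CC$ by a Hessian estimate in three regimes: outside a fixed conical neighbourhood of $\LeviDeg{P}$, where $\hess(P)\geq cI$ uniformly and boundedness of $\hess(H)$ gives $\hess(P-\delta H)>0$ for small $\delta$; inside a conical neighbourhood of each $L_j$ (where $\chi_j\equiv 1$), where the coincidence of null directions and the common $|\ell_j|^{2k-2}$-scaling in the normal direction, combined with strict positivity of $C_{11}$ off $\clweg$, delivers $\hess(P)\geq\delta\hess(H)$; and in the region where $\chi_0$ is active, where only the bulk $|z|^{2k}$-contribution matters and its Hessian is dominated by the positive eigenvalues of $\hess(P)$ once $\eps_0$ is chosen small enough. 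Strictness off $\bigcup L_j$ then follows from the fact that in each regime the inequalities above are strict on the complement of $\bigcup L_j$.

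The main obstacle is twofold. First, achieving $\smoo^\infty$-smoothness of $H$ at the origin is delicate because a generic $\chi_j([z])$ is only smooth on $\CC\setminus\{0\}$; this can be remedied by arranging the cutoffs to be smooth functions of the homogeneous degree-$0$ quantities $|\ell_i|^2/|z|^2$ so that each summand is a polynomial (and when $N\leq k$ one can bypass the partition of unity entirely and take $H=|f|^2|z|^{2(k-N)}$ with $f=\prod_j\ell_j$). Second, and more serious, is the boundary set $\bdy\clweg\cap\bigcup L_j$, where the null direction of $\hess(P)$ can shift from tangent-to-$L_j$ to directions pointing into $\clweg$ and where the cutoffs $\chi_j$ must interpolate; it is precisely here that both halves of Property~(A) --- the interior separation $\mathrm{int}(\clweg)\cap\bigcup L_j=\emptyset$ together with the absence of positive-dimensional analytic subvarieties in $\LeviDeg{P}\setminus\bigcup L_j$ --- must combine to yield a uniform Hessian bound, and extracting this uniformity is where the bulk of the work will lie.
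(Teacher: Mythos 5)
Part~(1) is handled exactly as in the paper, via Proposition~\ref{P:harmlines}; and your choice of local model near each $L_j$ --- a cutoff times $|\ell_j|^{2k}$ --- is in substance what the paper's Proposition~\ref{P:conebump} supplies (there, after translating $L_j$ to $\{z_1=0\}$, the local bumping is $\tfrac{C_1}{2k^2}|z_1|^{2k}$, and the hard content is the uniform Levi-form lower bound in a cone about $L_j$). So the overall architecture --- local cone bumpings around each exceptional line patched by a partition of unity, plus a piece to cover the rest of the sphere --- matches the paper. However, you only assert the quantitative estimate $\levi P\geq\delta\levi H$ inside each cone; the paper proves it by writing the Levi form of $P$ restricted to the cone around $L_j$ as $|w|^{2(k-1)}\sum_j|\zt|^{j-2}\bigl(\cdot\bigr)$ in polar-type coordinates and extracting the uniform eigenvalue bound. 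You would need to carry out something equivalent.

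The genuine gap, though, is the set $\LeviDeg{P}\setminus\bigcup_j L_j$, i.e.\ the wedge $\clweg$ of Property~(A) away from the exceptional lines. On this set $\hess(P)$ is degenerate, yet it is disjoint from the cones around the $L_j$'s, so your $H$ reduces there to $\eps_0\chi_0([z])|z|^{2k}$ --- and $\hess(|z|^{2k})$ is \emph{strictly} positive-definite off the origin. Hence for any $\delta>0$, at a point $z\in\LeviDeg{P}\setminus\bigcup_j L_j$ and along a null direction $v$ of $\hess(P)(z)$, one gets $\levi{(P-\delta H)}(z;v)=-\delta\,\levi{H}(z;v)<0$, so $P-\delta H$ fails to be plurisubharmonic. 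Shrinking $\delta$ does not help; the sign is wrong. Your three-regime verification never addresses this fourth regime, and the ``absence of positive-dimensional varieties'' half of Property~(A) is invoked only in passing. This is exactly where the paper brings in Result~\ref{R:noell} --- a localised version of Noell's Proposition~4.1, itself built on the Diederich--Fornaess construction --- which uses precisely that variety-free condition on $\LeviDeg{P}\setminus\bigcup_j L_j$ to manufacture a homogeneous $H_0\geq0$ with $\{H_0>0\}$ equal to an intermediate cone $\clBcone$ and with $\levi{(P-\delta H_0)}\geq C_0\delta\|z\|^{2(k-1)}\|v\|^2$ on that cone. Without such a subordinated bumping on the wedge, your candidate $H$ cannot satisfy both $H>0$ on $\LeviDeg{P}\setminus\bigcup_j L_j$ (needed for (a)) and $P-\delta H\in\psh(\CC)$ (needed for (b)). You would need to import a result of the type of Result~\ref{R:noell} and patch it in, which the paper does via the auxiliary cones $\clcone\subset\clBcone$ separating $\LeviDeg{P}\setminus\bigcup_j L_j$ from the $L_j$'s, followed by a final small perturbation $\chi^\ast\|z\|^{2k}$ supported in the strictly-plurisubharmonic region to achieve $H^{-1}\{0\}=\bigcup_j L_j$.
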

\smallskip

The next theorem is the analogue of Theorem~\ref{T:homobump} in the case when
$\LeviDeg{P}$ possesses Property~(B) 
\smallskip

\begin{theorem}\label{T:homobumpB} Let $P(z_1,z_2)$ be a plurisubharmonic, non-pluriharmonic
polynomial in $\CC$ that is homogeneous of degree $2k$ and has no pluriharmonic terms. Assume
that $\LeviDeg{P}$ possesses Property~(B). Then: 

\noindent{1) There exist a subharmonic, homogeneous polynomial $U$, and a holomorphic homogeneous
polynomial $F$ such that $P(z_1,z_2)=U(F(z_1,z_2))$; and}
\smallskip
  
\noindent{2) Let $L_1,\dots,L_N$ be the complex lines passing through $0\in\CC$ that constitute
$F^{-1}\{0\}$.
There exists a $\smoo^\infty$-smooth function $H\geq 0$ such that the following hold:}
\begin{enumerate}
\item[(a)] $H^{-1}\{0\}=\bigcup_{j=1}^N L_j$.
\item[(b)]  $(P-\delta H)\in\psh(\CC) \ \forall\delta:0<\delta\leq 1$.
\end{enumerate}
\end{theorem}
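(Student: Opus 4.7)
My plan is to prove Part~1 by extracting a holomorphic first integral from the rank-one structure of $\hess(P)$ forced by Property~(B), and then in Part~2 to construct $H$ as a composition $H=S\circ F$ with a carefully chosen function $S\colon\cplx\to\RR$.

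For Part~1, Property~(B) together with $\LeviDeg{P}=\CC$ forces the Levi form of $P$ to have a holomorphic null direction at every point, namely the tangent direction to the $\mathcal{H}$-foliation. Equivalently, $\partial\bar\partial P$ factors locally as $\alpha\,dF\otimes\overline{dF}$ for a real-valued $\alpha\geq 0$ and a local holomorphic $F$ whose level curves are the leaves. The $s$-homogeneity of $\hess(P)$ makes the foliation invariant under the $\cplx^*$-action, so the leaves are algebraic and we may take $F$ to be a holomorphic homogeneous polynomial of degree $d$ dividing $2k$. Since $\partial\bar\partial P$ factors globally through $F$, $P=U\circ F$ for a real-valued $U$; polynomial regularity and $s$-homogeneity of $P$ make $U$ a polynomial, $s$-homogeneous of (necessarily even) degree $\ell=2k/d$, subharmonic by the psh property of $P$, and without harmonic terms (the latter translating the ``no pluriharmonic terms'' hypothesis through the factorization).

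For Part~2, $F^{-1}\{0\}=L_1\cup\cdots\cup L_N$ is automatically a finite union of complex lines through $0$. I seek a smooth real-valued polynomial $S$ with $S\geq 0$, $S^{-1}\{0\}=\{0\}$, and $(U-\delta S)$ subharmonic on $\cplx$ for every $\delta\in(0,1]$; then $H:=S\circ F$ satisfies (a), and $(P-\delta H)=(U-\delta S)\circ F$ is psh because the composition of a subharmonic function with a holomorphic map is psh. Setting $c_*:=\inf_{|w|=1}U_{w\bar{w}}(w)/(\ell/2)^2\geq 0$: if $c_*>0$, take $S(w):=c_*|w|^\ell$, for which $(U-\delta S)_{w\bar{w}}\geq(1-\delta)c_*(\ell/2)^2|w|^{\ell-2}\geq 0$; if $c_*=0$, take $S(w):=U(w)+\er V(w)$ for a holomorphic polynomial $V$ chosen so that $U+\er V>0$ off the origin, so that $\er V$ being harmonic gives $(U-\delta S)_{w\bar{w}}=(1-\delta)U_{w\bar{w}}\geq 0$.

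The main obstacle is the $c_*=0$ case: producing the polynomial $V$. The angular profile $\psi(\theta):=U(e^{i\theta})$ is a trigonometric polynomial with Fourier support in $|n|\leq\ell-2$ (no $\pm\ell$ modes, by absence of harmonic terms in $U$). The subharmonicity inequality $\psi''+\ell^2\psi\geq 0$ annihilates exactly the $\pm\ell$ modes and, together with the sub-mean-value inequality $U(0)\leq\hat\psi_0$ and the non-harmonicity of $U$, forces $\hat\psi_0>0$. Taking $V(w):=cw^\ell$ contributes $\er V=\alpha\cos\ell\theta+\beta\sin\ell\theta$, a two-parameter adjustment in precisely the missing $\pm\ell$ modes, and a quantitative Fourier estimate --- bounding the oscillation of $\psi$ in terms of $\hat\psi_0$ via the subharmonicity inequality --- shows $(\alpha,\beta)$ can be chosen making $\psi+\alpha\cos\ell\theta+\beta\sin\ell\theta$ strictly positive. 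This one-variable adjustment lemma is the technically delicate step.
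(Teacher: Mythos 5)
Your Part~1 contains a genuine gap at its central step. You pass from ``$\partial\bar\partial P$ factors globally as $\alpha\,dF\otimes\overline{dF}$'' to ``$P=U\circ F$'' in one clause, but this implication is false without further input: for example, $P=|z_1z_2|^2+\er(z_1^4)$ is plurisubharmonic and its complex Hessian factors through $F=z_1z_2$ exactly as you describe, yet $P$ is \emph{not} a function of $F$. What rescues the implication is precisely the hypothesis that $P$ has no pluriharmonic terms, and this is where it must be invoked --- not, as you do, only afterwards to conclude that $U$ has no harmonic terms. This is exactly the content of Step~II of the paper's proof: one parametrises a generic level curve $F^{-1}\{c_0\}$ through a point of the form $(0,w_0)$, writes $v=P\circ\Psi$, notes that harmonicity of $v$ kills all non-harmonic monomials in its Taylor expansion, and then the absence of pluriharmonic terms in $P$ together with $\psi_1(0)=0$ forces $v\equiv 0$, so $P$ is \emph{constant} (not merely harmonic) on the level curves. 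You also assert that the $\mathbb{C}^*$-invariance of the foliation makes the leaves algebraic and $F$ a homogeneous polynomial; this is the conclusion of the paper's Lemma~\ref{L:homoImp}, which requires a non-trivial argument (comparison of weighted-homogeneous pieces of $F$ against the dilation action) --- the invariance statement alone does not deliver polynomiality.

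For Part~2 your route is genuinely different. The paper applies the Fornaess--Sibony bumping lemma (Lemma~\ref{L:subhBump}) to $U$ and takes $H(z)=|F(z)|^{2\nu}h(\rg{F(z)})$, which is smooth but not a polynomial; you instead seek a \emph{polynomial} $S$ with $S\geq 0$, $S^{-1}\{0\}=\{0\}$, and $U-\delta S$ subharmonic, setting $H=S\circ F$. Your reduction and the observation that $\hat\psi_0>0$ by the sub-mean-value property plus non-harmonicity of $U$ are sound. But the crux --- that the $\pm\ell$ Fourier modes alone, i.e.\ $\alpha\cos\ell\theta+\beta\sin\ell\theta$, always suffice to make $\psi+\alpha\cos\ell\theta+\beta\sin\ell\theta$ strictly positive, using the differential inequality $\psi''+\ell^2\psi\geq 0$ --- is precisely the substance of the one-variable bumping lemma, and you state it but do not prove it. If it holds, it would give a pleasant polynomial refinement of Lemma~\ref{L:subhBump} in this homogeneous setting, but as written the argument reduces the theorem to a lemma of essentially the same difficulty, which you flag as ``the technically delicate step'' and leave open. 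Until that lemma (and the constancy argument in Part~1) are supplied, the proof is incomplete.
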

\smallskip

Before moving on to the weighted case, let us make a few observations about the proofs of the above 
theorems. Part~(1) of Theorem~\ref{T:homobumpB} follows simply after it is established that $P$ is
constant on the level curves of the function $\mathcal{H}$ occurring in the description of
Property~(B). Part~(2) then follows by constructing a 
bumping of the {\em subharmonic} function $U$. The latter is well understood; the reader is 
referred, for instance, to \cite[Lemma~2.4]{fornaessSibony:cpfwpd89}. 
\smallskip

Proving 
Theorem~\ref{T:homobump} subtler. Essentially, it involves the following steps:
\begin{itemize}
\item {\em Step 1:} By Proposition~\ref{P:harmlines}, $\exepc(P)$ is a finite set, say 
$\{L_1,L_2,\dots,L_N\}$. Let $L_j:=\{(z_1,z_2):z_1=\zt_j z_2 \ \forall z_2\in\cplx\}$ for
some $\zt_j\in\cplx$. We fix a $\zt_j, \ j=1,\dots,N$, and view $P$ in $(\zt,w)$-coordinates 
given by the relations
\[
z_1=(\zt+\zt_j)w, \quad\text{and}\quad z_2=w,
\]
and we define $\jayP(\zt,w):=P((\zt+\zt_j)w,w)$. We expand $\jayP(\zt,w)$ as
a sum of polynomials that are homogeneous in the first variable, with increasing degree in $\zt$.

\item {\em Step 2:} By examining the lowest-degree terms, in the $\zt$-variable, of this 
expansion, one can find cones $\scone{\zt_j}{\sigma_j}$, and functions $H_j$ that are smooth in 
$\scone{\zt_j}{\sigma_j}, \ j=1,\dots,N$, such that $(P-\delta H_j)$ are bumpings of $P$ inside the 
aforementioned cones for each $\delta:0<\delta\leq 1$.

\item {\em Step 3:} Property (A) allows us to patch together all these bumpings $(P-\delta H_j),  
\ j=1,\dots,N$ --- shrinking $\delta>0$ sufficiently when necessary --- to obtain our result.
\end{itemize}
For any $\zt\in\cplx$, the notation $\scone{\zt}{\eps}$, used above, denotes the open cone
\[
\scone{\zt}{\eps} \ := \ \{(z_1,z_2)\in\CC:|z_1-\zt z_2|<\eps|z_2|\}.
\]
Note that $\scone{\zt}{\eps}$ is a conical neighbourhood of the punctured complex line
$\{(z_1=\zt z_2,z_2):z_2\in\cplx\setminus\{0\}\}$.
The details of the above discussion are presented in Sections~\ref{S:tech} and \ref{S:proofsHomo} 
below.     
\smallskip

Continuing with the theme of homogeneous polynomials, we present a result which --- though it
has no bearing on our Main Theorems below --- we found in the course of our investigations 
relating to Theorem~\ref{T:homobumpB}. Since it could be of independent interest, we present
it as:
\smallskip

\begin{theorem}\label{T:2homo}
Let $Q(z_1,z_2)$ be a plurisubharmonic, non-harmonic polynomial
that is homogeneous of degree $2p$ in $z_1$ and $2q$ in $z_2$. Then, $Q$ is of the form
\[
Q(z_1,z_2) \ = \ U(z_1^d z_2^D),
\]
where $d,D\in\zahl_+$ and $U$ is a homogeneous, subharmonic, non-harmonic polynomial.
\end{theorem}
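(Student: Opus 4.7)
The plan is to convert plurisubharmonicity of $Q$ into a differential inequality on the angular torus $\mathbb{T}^2$ and then eliminate one of the two angular variables by integrating that inequality by parts. Writing $z_j=r_je^{i\theta_j}$, the bihomogeneity of $Q$ forces
\[
Q(z_1,z_2) \ = \ r_1^{2p}r_2^{2q}F(\theta_1,\theta_2),
\]
where $F$ is a real, finite trigonometric polynomial on $\mathbb{T}^2$. A direct polar-coordinate computation of the complex Hessian yields $Q_{z_1\bar z_1}=\tfrac{r_1^{2p-2}r_2^{2q}}{4}(4p^2F+F_{\theta_1\theta_1})$ (with analogous formulas for $Q_{z_2\bar z_2}$ and $Q_{z_1\bar z_2}$); crucially, $|Q_{z_1\bar z_2}|^2$ contributes an imaginary-part term proportional to $(pF_{\theta_2}-qF_{\theta_1})^2$. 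Clearing the positive $r$-prefactors and rotating angular variables to $\sigma:=p\theta_1+q\theta_2$, $\tau:=-q\theta_1+p\theta_2$ (so that $pF_{\theta_2}-qF_{\theta_1}=(p^2+q^2)F_\tau$), the Levi determinant inequality becomes
\[
4F\,F_{\tau\tau} \ - \ 4F_\tau^2 \ + \ \det\bigl(\mathrm{Hess}_{\sigma\tau}F\bigr) \ \geq \ 0 \quad \text{on } \mathbb{T}^2.
\]

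The crux is to integrate this inequality over $\mathbb{T}^2$. Periodicity of $F$ gives $\int_{\mathbb{T}^2}FF_{\tau\tau}=-\int_{\mathbb{T}^2}F_\tau^2$. And since
\[
\det\bigl(\mathrm{Hess}_{\sigma\tau}F\bigr) \ = \ \partial_\sigma(F_\sigma F_{\tau\tau})-\partial_\tau(F_\sigma F_{\sigma\tau})
\]
is an exact divergence, its integral over the closed surface $\mathbb{T}^2$ vanishes. The displayed inequality therefore collapses to $-8\int_{\mathbb{T}^2}F_\tau^2\geq 0$, which forces $F_\tau\equiv 0$: so $F$ depends only on $\sigma$.

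To extract the advertised form, set $k:=\gcd(p,q)$, $d:=p/k$, $D:=q/k$; then $\sigma=k(d\theta_1+D\theta_2)=k\arg(z_1^dz_2^D)$. Writing $w:=z_1^dz_2^D$, we get $Q(z_1,z_2)=|w|^{2k}\widetilde F(\arg w)$ for some $\widetilde F$; expanding $\widetilde F$ in Fourier modes of frequency at most $k$ realises the right-hand side as $U(w)=\sum_{|n|\leq k}a_nw^{k+n}\bar w^{k-n}$, a polynomial in $(w,\bar w)$ that is real-homogeneous of degree $2k$. Finally, the identity $Q_{z_1\bar z_1}=d^2|z_1|^{2d-2}|z_2|^{2D}U_{w\bar w}$ transfers plurisubharmonicity (resp.\ non-harmonicity) of $Q$ to subharmonicity (resp.\ non-harmonicity) of $U$.

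The principal obstacle is purely algebraic: recognising that the Levi determinant inequality, after the appropriate rotation of angles, organises itself as the sum of a term ($4FF_{\tau\tau}-4F_\tau^2$) whose torus integral equals $-8\int F_\tau^2$ and a determinant-of-Hessian term whose integral vanishes automatically on the closed surface. Once this packaging is apparent, the rest is bookkeeping.
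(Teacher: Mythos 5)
Your argument is correct, and it takes a genuinely different route from the paper's. The paper first shows (borrowing the machinery of Proposition~\ref{P:harmlines}) that the coefficient of $|z_1|^{2p}|z_2|^{2q}$ is strictly positive, splits $Q=C_{pq}|z_1|^{2p}|z_2|^{2q}+R$, observes that the first summand has vanishing Levi form in the direction $(qz_1,-pz_2)$ tangent to the curves $z_1^pz_2^q=c$, and so plurisubharmonicity of $Q$ transfers to the single scalar inequality $(q\partial_{\theta_1}-p\partial_{\theta_2})^2T\geq 0$ for the angular part $T$ of $R$; the conclusion then follows because a $2\pi$-periodic real-analytic function that is convex along a line must be constant along that line. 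You instead keep the full $Q=r_1^{2p}r_2^{2q}F(\theta_1,\theta_2)$, test not a directional Levi form but the \emph{determinant} of the complex Hessian, pass to the rotated angular variables $\sigma=p\theta_1+q\theta_2$, $\tau=-q\theta_1+p\theta_2$, and integrate the resulting inequality over $\mathbb{T}^2$, exploiting the two identities $\int_{\mathbb{T}^2}FF_{\tau\tau}=-\int_{\mathbb{T}^2}F_\tau^2$ and $\int_{\mathbb{T}^2}\det\bigl(\mathrm{Hess}_{\sigma\tau}F\bigr)=0$ (the latter because the Hessian-determinant density is an exact divergence); this forces $F_\tau\equiv 0$ directly. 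Both arguments isolate the same key conclusion --- that the angular part of $Q$ depends only on $p\theta_1+q\theta_2$ --- after which the bookkeeping (setting $d=p/\gcd(p,q)$, $D=q/\gcd(p,q)$, checking that the admissible Fourier frequencies give a genuine polynomial $U$ in $w,\bar w$, and transferring subharmonicity of $U$ from the identity $Q_{z_1\bar z_1}=d^2|z_1|^{2d-2}|z_2|^{2D}U_{w\bar w}$) is essentially the same as in the paper. Your route avoids the preliminary positivity lemma for $C_{pq}$ and the decomposition $Q=A+R$, replacing the paper's pointwise ``convex plus periodic implies constant'' step by a global integration-by-parts on the torus; the paper's route, which uses only the Levi form in a single direction, stays closer to the directional Levi-form technology that appears throughout Section~\ref{S:tech}.
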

\smallskip

The reader will note the resemblance between the conclusion of the above theorem and Part~(1)
of Theorem~\ref{T:homobumpB}. The proof of Theorem~\ref{T:2homo} is given in 
Section~\ref{S:proofs2homo}.
\smallskip  

The reader will probably intuit that the bumping results for an $(m_1,m_2)$-homogeneous $P$ are
obtained by applying Theorem~\ref{T:homobump} and Theorem~\ref{T:homobumpB} to the pullback of $P$ 
by an appropriate proper holomorphic mapping that homogenises the pullback. We now state our
results in the $(m_1,m_2)$-homogeneous setting. The first of our main 
results --- rephrased for $(m_1,m_2)$-homogeneous polynomials --- is:
\smallskip

\begin{maintheorem}\label{MT:homobump} Let $P(z_1,z_2)$ be an $(m_1,m_2)$-homogeneous, plurisubharmonic
polynomial in $\CC$ that has no pluriharmonic terms. Assume that $\LeviDeg{P}$ possesses
Property~(A). Then:

\noindent{1) $\exepc(P)$ consists of finitely many curves of the form 
\[
\left\{(z_1,z_2):z_1^{m_1/\gcd(m_1,m_2)}=\zt_j z_2^{m_2/\gcd(m_1,m_2)}\right\},
\]
$j=1,\dots,N$, where $\zt_j\in\cplx$; and}
\smallskip
  
\noindent{2) There exist a constant $\delta_0>0$ and a $\smoo^\infty$-smooth $(m_1,m_2)$-homogeneous 
function $G\geq 0$ such that the following hold:}
\begin{enumerate}
\item[(a)] $G^{-1}\{0\}=\bigcup_{C\in\exepc(P)}C$.
\item[(b)] $(P-\delta G)\in\psh(\CC)$ and is strictly plurisubharmonic on
$\CC\setminus\bigcup_{C\in\exepc(P)}C \ \forall\delta\in(0,\delta_0)$.
\end{enumerate}
\end{maintheorem}
\smallskip

The next result tells us what happens when $\LeviDeg{P}$ possesses Property~(B). However, in order to
state this, we will need to refine a definition made in Section~\ref{S:intro}. A real or complex polynomial
$Q$ defined on $\CC$ is said to be {\em $(m_1,m_2)$-homogeneous with weight $r$} if
$Q(t^{1/m_1}z_1,t^{1/m_2}z_2)=t^rQ(z_1,z_2), \ \forall z=(z_1,z_2)\in\CC$ and for every $t>0$. Our second
result can now be stated as follows 
\smallskip

\begin{maintheorem}\label{MT:homobumpB} Let $P(z_1,z_2)$ be an $(m_1,m_2)$-homogeneous, plurisubharmonic
polynomial in $\CC$ that has no pluriharmonic terms. Assume that $\LeviDeg{P}$ possesses
Property~(B). Then:

\noindent{1) There exist a holomorphic polynomial $F$ that is $(m_1,m_2)$-homogeneous with weight
$1/2\nu$, $\nu\in\zahl_+$, and a subharmonic, polynomial $U$ that is homogeneous of degree $2\nu$
such that $P(z_1,z_2)=U(F(z_1,z_2))$; and}
\smallskip

\noindent{2) There exists a $\smoo^\infty$-smooth $(m_1,m_2)$-homogeneous
function $G\geq 0$ such that the following hold:}
\begin{enumerate}
\item[(a)] $G^{-1}\{0\}=\bigcup_{C\in\exepc(P)}C$.
\item[(b)] $(P-\delta G)\in\psh(\CC) \ \forall\delta:0<\delta\leq 1$.
\end{enumerate}
\end{maintheorem}
\medskip
    
\section{Some technical propositions}\label{S:tech}

The goal of this section is to state and prove several results of a technical nature that will
be needed in the proof of Theorem~\ref{T:homobump}. Key among these is Proposition~\ref{P:harmlines},
stated above. We begin with its proof.
\smallskip

\begin{custom}\label{Pr:harmlines}
\begin{proof}[{\bf The proof of Proposition \ref{P:harmlines}}]
Assume that $P$ has at least one complex line, say $L$, passing through $0\in\CC$ such that
$P|_L$ is harmonic. Since $P$ non-pluriharmonic, there exists a complex line $\cpln\neq L$ passing
through $0$ such that $P|_\cpln$ is subharmonic and non-harmonic. By making a complex-linear change of
coordinate if necessary, let us work in global holomorphic coordinates $(z,w)$ with respect to which
\[
L \ = \ \{(z,w)\in\CC \ | \ z=0\},\quad\qquad
\cpln \ = \ \{(z,w)\in\CC \ | \ w=0\}.
\]
Let $M$ be the lowest degree to which $z$ and $\zbar$ occur among the monomials constituting
$P$. Let us write
\[
P(z,w) \ = \ \sum_{j=M}^{2k}\sum_{\alpha+\beta=j \atop \mu+\nu=2k-j}
                C^j_{\alpha\beta\mu\nu}z^\alpha\zbar^\beta w^\mu\wbar^\nu.
\]
Notice that by construction
\[
s(z) \ := \ \sum_{\alpha+\beta=2k}C^{2k}_{\alpha\beta 00}z^\alpha\zbar^\beta \quad\text{\em is
subharmonic and non-harmonic.}
\]
We shall make use of this fact soon. We now study the restriction of $P$ along the
complex lines $L^\zt := \{(z=\zt w,w):w\in\cplx\}$. Note that
\begin{equation}\label{E:subharm}
P(\zt w,w) \ = \ \sum_{m+n=2k}\left\{\sum_{j=M}^{2k}\sum_{\alpha+\beta=j}
                C^j_{\alpha\beta,(m-\alpha),(n-\beta)}\zt^\alpha\ztbar^\beta\right\}
                w^m\wbar^n.
\end{equation}
Denoting the function $w\mapsto P(\zt w,w)$ by $P_\zt(w)$, let us use the notation
\begin{equation}
\laplc P_\zt(w) \ \equiv \ \sum_{m+n=2k}\phi_{mn}(\zt)w^{m-1}\wbar^{n-1}. \notag
\end{equation}
Note that
\begin{equation}\label{E:zero/harm}
\{\zt\in\cplx:P|_{L^\zt} \ \text{is harmonic}\} \ = \
        \{\zt\in\cplx:\phi_{mn}(\zt)=0 \ \forall m,n\geq 0 \ \backepsilon \ m+n=2k\}.
\end{equation}
Since $P_\zt$ is subharmonic $\forall\zt\in\cplx$, the coefficient of the $|w|^{2k}$ term
occurring in \eqref{E:subharm} --- i.e. the polynomial $\phi_{kk}(\zt)/k^2$ --- is
non-negative, and must be positive at $\zt\in\cplx$
whenever $P_\zt$ is non-harmonic. To see this, assume for the moment that, for some
$\zt^*\in\cplx$, $P_{\zt^*}$ is non-harmonic but $\phi_{kk}(\zt^*)\leq 0$. Then,
as $P_{\zt^*}(w)$ is real-analytic, $\laplc P_{\zt^*}(e^{i\tht})>0$ {\em except} at
finitely many values of $\tht\in[0,2\pi)$. Hence, we have
\[
0 \ < \ \int_0^{2\pi}\laplc P_{\zt^*}(e^{i\tht}) \ d\tht \ = \
\int_0^{2\pi}\left\{\sum_{m+n=2k}\phi_{mn}(\zt^*)e^{i(m-n)\tht}\right\} d\tht \
= \ 2\pi \ \phi_{kk}(\zt^*).
\]
The assumption that $\phi_{kk}(\zt^*)\leq 0$ produces a contradiction in the above
inequality, whence our assertion. Thus $\phi_{kk}\geq 0$.
\smallskip

Let us study the zero-set of $\phi_{kk}$. We first consider the highest-order terms of
$\phi_{kk}$, namely
\[
\sum_{\alpha+\beta=2k}k^2C^{2k}_{\alpha\beta,(k-\alpha),(k-\beta)}\zt^\alpha\ztbar^\beta \ =
\ k^2C^{2k}_{kk00}|\zt|^{2k}.
\]
The tidy reduction on the right-hand side occurs because we must only consider those
pairs of subscripts $(\alpha,\beta)$ such that $(k-\alpha)\geq 0$ and $(k-\beta)\geq 0$.
Note that $C^{2k}_{kk00}$ is the coefficient of the $|z|^{2k}$ term of $s(z)$,
which is subharmonic and non-harmonic. Thus $C^{2k}_{kk00}>0$. The nature of the
highest-order term of $\phi_{kk}$ shows that $\phi_{kk}\neq 0$ when $|\zt|$ is sufficiently large.
We have thus inferred the following

\noindent{{\bf Fact:} {\em $\phi_{kk}$ is a real-analytic function such that
$\phi_{kk}\geq 0$, $\phi_{kk}\not\equiv 0$, and such that $\phi_{kk}^{-1}\{0\}$ is
compact.}}
\smallskip

Since $\phi_{kk}$ is a real-analytic function on $\cplx$ that is not identically zero,
$\rDim[\phi_{kk}^{-1}\{0\}]\leq 1$. Let us assume that $\rDim[\phi_{kk}^{-1}\{0\}]=1$.
We make the following

\noindent{{\bf Claim:} {\em The function $\phi_{kk}$ is subharmonic}}

\noindent{Consider the function
\[
S(\zt) \ := \ \frac{k^2}{2\pi}\int_0^{2\pi}P(\zt e^{i\tht},e^{i\tht}) \ d\tht \ .
\]
Notice that
\begin{equation}\label{E:altformS}
S(\zt) \ := \ \frac{k^2}{2\pi}\int_0^{2\pi}\left\{\sum_{m+n=2k}
        \frac{\phi_{mn}(\zt)}{mn}e^{i(m-n)\tht}\right\} d\tht \
= \ \phi_ {kk}(\zt).
\end{equation}
Furthermore, denoting the function $\zt\mapsto P(\zt w,w)$ by $P_w(\zt)$, we see that
\begin{equation}\label{E:laplacianS}
\mixderiv{\zt}{\zt}{S}(\zt) \ = \  
\frac{k^2}{2\pi}\int_0^{2\pi}\mixderiv{\zt}{\zt}{P_{e^{i\tht}}}(\zt) \ d\tht \  = \
\frac{k^2}{2\pi}\int_0^{2\pi}\mixderiv{z}{z}{P}(\zt e^{i\tht},e^{i\tht}) \ d\tht \
\geq \ 0.
\end{equation}
The last inequality follows from the plurisubharmonicity of $P$. By
\eqref{E:altformS} and \eqref{E:laplacianS}, the above claim is established.}
\smallskip

By assumption, $\phi_{kk}^{-1}\{0\}$ is a $1$-dimensional real-analytic variety, and
we have shown that it is compact. Owing to compactness, there is an open, connected
region $D\Subset\cplx$ such that $\bdy D$ is a piecewise real-analytic
curve (or a disjoint union of piecewise real-analytic curves). Furthermore
\[
\phi_{kk}(\zt) \ > \ 0 \ \forall\zt\in D, \quad\qquad \phi_{kk}(\zt) \ = \ 0 \
\forall\zt\in\bdy D.
\]
But the above statement contradicts the Maximum Principle for $\phi_{kk}$, which is
subharmonic. Hence, our assumption must be wrong. Thus, $\phi^{-1}\{0\}$ is a discrete
set; and being compact, it is a finite set. In view of \eqref{E:zero/harm}, we
have $\{\zt\in\cplx:P|_{L^\zt} \ \text{is harmonic}\}\subseteq\phi_{kk}^{-1}\{0\}$,
which establishes our result.
\end{proof}   
\end{custom}
\smallskip

Our next result expands upon of the ideas summarised in {\em Step~1} and {\em Step~2} in
Section~\ref{S:results} above. But first, we remind the reader that, given a function $G$ 
of class $\smoo^2$ in an open set $U\subset\CC$ and a vector $v=(v_1,v_2)\in\CC$, the 
Levi-form $\levi{G}(z;v)$ is defined as
\[
\levi{G}(z;v):=\sum_{j,k=1}^2\bdy^{2}_{j\overline{k}}G(z)v_j\overline{v_k}.
\]
A comment about the hypothesis imposed on $P$ in the following result: the $P$ below is the
prototype for the polynomials $\widetilde{{}^jP}$ discussed in {\em Step~1} of
Section~\ref{S:results}. 
\smallskip

\begin{proposition}\label{P:conebump} Let $P(z_1,z_2)$ be a plurisubharmonic polynomial that is
homogeneous of degree $2k$, and contains no pluriharmonic terms. Assume that $P(z_1,z_2)$
vanishes identically along $L:=\{(z_1,z_2):z_1=0\}$ and that there exists an $\eps>0$ such that
$P$ is strictly plurisubharmonic in the cone $\left(\scone{0}{\eps}\setminus L\right)$.
There exist constants $C_1$ and
$\sigma>0$ --- both of which depend only on $P$ --- and a non-negative function $H\in\smoo^\infty(\CC)$ 
that is homogeneous of degree $2k$ such that:
\begin{enumerate}
\item[a)] $H(z_1,z_2)>0$ when $0<|z_1|<\sigma|z_2|$.
\item[b)] For any $\delta:0<\delta\leq 1$:
\begin{align}
\levi{(P-\delta H)}(z;(V_1,V_2)) \ &\geq \ C_1|z_1|^{2(k-1)}\left(|V_1|^2
	\left(1-\frac{\delta}{2}\right)+\left|\frac{z_1}{z_2}V_2\right|^2\right)\notag \\
        &\qquad \; \forall z: 0\leq |z_1|<\sigma|z_2|, \forall V\in\CC.\notag
\end{align}
\end{enumerate}
\end{proposition}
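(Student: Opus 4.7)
The plan is to take $H(z_1,z_2):=c|z_1|^{2k}$ for a positive constant $c$ depending only on $P$, and then reduce assertion~(b) to a quantitative lower bound on $\levi{P}$ in a cone around $L$. This $H$ is $\smoo^\infty$, non-negative, homogeneous of degree $2k$, and vanishes exactly on $L$, so part~(a) is immediate for any $\sigma>0$. A direct computation gives $\levi{H}(z;V) = ck^2|z_1|^{2(k-1)}|V_1|^2$, whence
\[
\levi{(P-\delta H)}(z;V) \;=\; \levi{P}(z;V) \,-\, c\delta k^2|z_1|^{2(k-1)}|V_1|^2.
\]
A short algebraic check shows that (b) follows, with the choices $c := K_0/(2k^2)$ and $C_1 := K_0$, provided we can establish
\begin{equation*}
\levi{P}(z;V) \;\geq\; K_0\,|z_1|^{2(k-1)}\!\left(|V_1|^2 + \left|\tfrac{z_1}{z_2}\right|^2|V_2|^2\right) \tag{$\dagger$}
\end{equation*}
for some $K_0,\sigma > 0$ depending only on $P$ and every $z \in \scone{0}{\sigma}$, $V \in \CC$.

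To prove $(\dagger)$, first use the $(2k-2)$-homogeneity of $\levi{P}$ to reduce to the slice $\{|z_2|=1\}\cap\scone{0}{\sigma}$. Analyze the structure of $P$ near $L$: since $P|_L\equiv 0$ and $P$ has no pluriharmonic terms, every monomial $c_{\alpha\beta\mu\nu}z_1^\alpha\bar z_1^\beta z_2^\mu\bar z_2^\nu$ of $P$ satisfies $\alpha+\beta\geq M$ for a minimal $M\geq 1$. Writing $P = P_M + R$, where $P_M$ is the sum of monomials of bidegree $M$ in $(z_1,\bar z_1)$, one observes that $P_M$ is itself plurisubharmonic: the family $\{t^{-M}P(tz_1,z_2)\}_{t>0}$ consists of psh polynomials (psh being preserved under the complex-linear map $(z_1,z_2)\mapsto(tz_1,z_2)$ and positive rescaling), and it converges pointwise to $P_M$ as $t\downarrow 0$. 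Now split $\{|z_2|=1\}\cap\scone{0}{\sigma}$ into an annular piece $\{\rho_0\leq|z_1|\leq\sigma\}$, where strict plurisubharmonicity of $P$ off $L$ yields a uniform positive lower bound on $\levi{P}$ that trivially dominates the right-hand side of $(\dagger)$, and a thin neighborhood $\{|z_1|<\rho_0\}$ of $L$. In the thin region, expand each entry of $\levi{P}$ in powers of $(z_1,\bar z_1)$: the leading parts come from $\levi{P_M}$, with remainders of strictly higher $|z_1|$-order from $\levi{R}$. Invoking the non-negativity of $\levi{P_M}$, the Cauchy--Schwarz inequality $|\partial^2_{1\bar 2}P|^2\leq\partial^2_{1\bar 1}P\cdot\partial^2_{2\bar 2}P$ (to control the cross term $2\Re(\partial^2_{1\bar 2}P\cdot V_1\bar V_2)$), and a rescaling in $z_1$ via $(z_1,z_2)\mapsto(tz_1,z_2)$, one extracts $(\dagger)$ in the thin region after shrinking $\sigma$ if necessary.

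The main obstacle is precisely this thin-region estimate: although strict plurisubharmonicity of $P$ off $L$ furnishes $\levi{P}>0$ pointwise there, the Levi form degenerates to zero as $z\to L$, and its rate of decay---governed by the $z_1,\bar z_1$-vanishing orders of the Hessian entries of $P_M$---must be matched against the target exponent $|z_1|^{2(k-1)}$. The delicate point is to absorb the cross term $2\Re(\partial^2_{1\bar 2}P\cdot V_1\bar V_2)$ and the higher-order remainder $\levi{R}$ into the diagonal contributions without spoiling the positivity required for $(\dagger)$.
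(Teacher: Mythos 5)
Your choice $H(z_1,z_2)=c|z_1|^{2k}$ and the reduction of~(b) to the pointwise lower bound $(\dagger)$ on $\levi{P}$ in a cone around $L$ coincide exactly with the paper's strategy (the paper's $H$ is $\tfrac{C_1}{2k^2}|z_1|^{2k}$, and the estimate it proves first is precisely $(\dagger)$). The divergence is in how $(\dagger)$ is established, and there the proposal has a genuine gap which you correctly flag but do not resolve. The plan of isolating the lowest $(z_1,\bar z_1)$-bidegree part $P_M$, noting it is plurisubharmonic, and treating $\levi{R}$ as a higher-order remainder cannot produce the positive constant $K_0$: $P_M$ can be plurisubharmonic yet have \emph{identically degenerate} Levi form, in which case ``non-negativity of $\levi{P_M}$ plus Cauchy--Schwarz'' returns only $\levi{P}\geq 0$ to leading order, and the strict positivity off $L$ is supplied entirely by the interaction with $R$. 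A concrete instance satisfying all the hypotheses is
\[
P(z_1,z_2)=|z_1|^2|z_2|^{2k-2}+|z_1|^4|z_2|^{2k-4},
\]
for which $M=2$, $P_M=|z_1|^2|z_2|^{2k-2}$, and a short computation gives ${\rm det}\,\hess(P_M)\equiv 0$, while ${\rm det}\,\hess(P)=k^2|z_1|^4|z_2|^{4k-8}>0$ off $L$. The positivity of the latter determinant comes from the cross-terms between $P_M$ and $R=|z_1|^4|z_2|^{2k-4}$, so the ``remainder'' is exactly what you would be discarding.

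The paper avoids this by never singling out a leading term. Setting $z_1=\zt w$, $z_2=w$ and factoring $|w|^{2(k-1)}$ out of the Levi form, it packages the contributions of \emph{all} bidegrees $M\leq j\leq 2k$ into one Hermitian matrix ${\sf F}(r;\theta_1,\theta_2)=\sum_{j=M}^{2k}r^{j-2}M^{(j)}(\theta_1,\theta_2)$ acting on $(V_1,\zt V_2)$, where $r=|\zt|$ and the $M^{(j)}$ have trigonometric-polynomial entries. Strict plurisubharmonicity of $P$ on $\scone{0}{\eps}\setminus L$ says precisely that ${\sf F}$ is positive definite for $0<r<\eps$ at every point of $\tor\times\tor$, and the uniform rate $C_1 r^{2(k-1)}$ for its least eigenvalue is extracted from ${|{\rm det}\,{\sf F}|}/{\|{\sf F}\|}$ together with compactness of $\tor\times\tor$ and the bounded polynomial dependence on $r$. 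It is this compactness step, applied to the full sum rather than to $P_M$ alone, that actually delivers the constant; your route to $(\dagger)$ has no substitute for it.
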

\begin{proof} Let us write
\[
P(z_1,z_2) \ = \ \sum_{j=M}^{2k}Q_j(z_1,z_2),
\]
where each $Q_j$ is the sum of all monomials of $P$ that involve powers of $z_1$ and
$\zbar_1$ having total degree $j, \ M\leq j\leq 2k$.
We make a Levi-form calculation. Let us define $\phi:=\rg{w}$ and
$\alpha:=\rg{\zt}$. With this notation, the Levi-form of $P$ at points $(z_1,z_2)=(\zt w,w)$ can
be written as
\begin{multline}\label{E:leviform}
\levi{P}((\zt w,w); V) \\
 := \ |w|^{2(k-1)}\sum_{j=M}^{2k}|\zt|^{j-2}\times(V_1 \quad \zt V_2) \   
                                \begin{pmatrix}
                                \ T_{11}^{(j)}(\phi,\alpha) & T_{12}^{(j)}(\phi,\alpha) \ \\
                                {} & {} \\
                                \ \overline{T_{12}^{(j)}(\phi,\alpha)} & T_{22}^{(j)}(\phi,\alpha) \
                                \end{pmatrix} \ \begin{pmatrix}
                                                \ \overline{V_1} \ \\
                                                {} \\
                                                \ \overline{\zt V_2} \
                                                \end{pmatrix} \ .
\end{multline}
where $T_{11}^{(j)}$, $T_{12}^{(j)}$ and $T_{22}^{(j)}$ are trigonometric polynomials obtained when
$\levi{Q_j}((\zt w,w);V)$ is written out using the substitutions
\[
w=|w|e^{i\phi} \qquad\text{and}\qquad \zt \ = \ |\zt|e^{i\alpha},
\]
$j=M,\dots,2k$. Now, consider the matrix-valued functions 
$\funz \ \bcdot):\tor\times\tor\lrarw\cplx^{2\times 2}$ defined by
\[
\funz\tht_1,\tht_2) \ := \ \sum_{j=M}^{2k}r^{j-2}\begin{pmatrix}
                                \ T_{11}^{(j)}(\tht_1,\tht_2) & T_{12}^{(j)}(\tht_1,\tht_2) \ \\
                                {} & {} \\
                                \ \overline{T_{12}^{(j)}(\tht_1,\tht_2)} & T_{22}^{(j)}(\tht_1,\tht_2) \
                                \end{pmatrix}.
\]
Here $\tor$ stands for the circle, and $\funz \ \bcdot)$ is a periodic function in $(\tht_1,\tht_2)$.
Let us introduce the notation $\ev(M):=\min_{\lambda\in\sigma(M)}|\lambda|$ --- i.e. the modulus of
the least-magnitude eigenvalue of the matrix $M$. Since $\funz \ \bcdot)$ takes values in the class of
{\em $2\times 2$ Hermitian matrices}, we get
\begin{equation}\label{E:eigenRel}
\ev\left[\funz\tht_1,\tht_2)\right] \ = \ \frac{|{\rm det}(\funz\tht_1,\tht_2))|}
								{\|\funz\tht_1,\tht_2)\|_2},
\end{equation}
where the denominator represents the operator norm of the matrix $\funz \ \bcdot)$.
Since $P\in{\sf spsh}\left(\scone{0}{\eps}\setminus L\right)$, comparing $\funz \ \bcdot)$ with the 
Levi-form computation \eqref{E:leviform}, we see that, provided $r\in(0,\eps)$
\begin{itemize}
\item $\funz \ \bcdot)$ takes strictly positive-definite values on $\tor\times\tor$;
\item in view of the above and by the relation \eqref{E:eigenRel}, 
$\ev\left[\funz \ \bcdot)\right]$ are continuous functions on $\tor\times\tor$; and
\item owing to the preceding two facts, an estimate of
$\ev\left[\funz \ \bcdot)\right]$ using the quadratic formula tells us that
there exists a $C_1>0$ such that 
\begin{align}
\ev\left[\funz\tht_1,\tht_2)\right] \ \geq \ C_1r^{2(k-1)}\quad 
						&\forall(\tht_1,\tht_2)\in\tor\times\tor \notag\\
&\text{(provided $r:0<r<\eps$).}\notag
\end{align}
\end{itemize}
Substituting $\tht_1=\phi$ and $\tht_2=\alpha$ above therefore gives us
\begin{align}
\levi{P}((\zt w,w);V) \ \geq \ C_1|\zt|^{2(k-1)}|w|^{2(k-1)}\left(|V_1|^2+|\zt V_2|^2\right)
        \quad &\forall\zt:0\leq|\zt|<\eps, \notag \\
	&\forall w\in\cplx \ \text{and} \ \forall V\in\CC.\notag
\end{align}

Let us now define 
\[
H(z_1,z_2) \ := \ \frac{C_1}{2k^2}|z_1|^{2k},
\]
and fix a $\sigma$ such that $0<\sigma<\eps$. Then
\begin{align}
\levi{(P-\delta H)}((\zt w,w);V) \ &\geq \ C_1|\zt|^{2(k-1)}|w|^{2(k-1)}\left(|V_1|^2
        \left(1-\frac{\delta}{2}\right)+|\zt V_2|^2\right) \notag\\
        &\qquad \; \forall\zt: 0\leq |\zt|<\sigma, \ \forall w\in\cplx \ \text{and} \ 
		\forall V\in\CC.\notag
\end{align}
The last inequality is the desired result.
\end{proof}
\smallskip
                                
Our next lemma is the first result of this section that refers to $\Lambda$-homogeneous polynomials
for a general ordered pair $\Lambda$. This result will provide a useful first step towards tackling
the theorems in Section~\ref{S:results} pertaining to plurisubharmonic polynomials that possess 
Property~(B).
\smallskip

\begin{lemma}\label{L:homoImp} Let $P$ be a $(m_1,m_2)$-homogeneous plurisubharmonic, non-pluriharmonic
polynomial in $\CC$ having Property~(B). Then, there exists a rational number $q^*$
and a complex polynomial $F$ that is $(m_1,m_2)$-homogeneous with weight $q^*$ such that $P$ is harmonic along
the smooth part of the level sets of $F$.
\end{lemma}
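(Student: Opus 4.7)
The plan is to exploit the one-parameter group of weighted dilations $\Phi_t(z_1, z_2) := (t^{1/m_1} z_1, t^{1/m_2} z_2)$ for $t > 0$, and show that after an innocuous normalization, $\mathcal{H}$ itself is forced to be an $(m_1, m_2)$-homogeneous polynomial of positive rational weight.

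First I would establish that the foliation of $\CC$ by the level curves $\mathcal{H}^{-1}(c)$ is $\Phi_t$-invariant \emph{as a foliation}. Since $P \circ \Phi_t = tP$ and $\Phi_t$ is biholomorphic, a direct parametrization computation shows that if $P$ is harmonic along a curve $C$, then $P$ is also harmonic along $\Phi_t(C)$. At a generic point of $\CC$ the complex Hessian $\hess(P)$ has rank exactly one: it is positive semidefinite (plurisubharmonicity), nonzero on a dense open set (non-pluriharmonicity), and cannot have rank two anywhere, since rank two would force strict plurisubharmonicity there, contradicting $\LeviDeg{P}=\CC$. Consequently the null direction of $\hess(P)$ is unique up to scalar at generic points, and must be tangent to any curve through the point on which $P$ is harmonic; this forces $\Phi_t$ to permute the leaves $\mathcal{H}^{-1}(c)$.

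Hence, for each $t > 0$ one obtains a holomorphic map $G_t$ defined on (an open dense subset of) the image of $\mathcal{H}$ satisfying the functional equation $\mathcal{H}(\Phi_t x) = G_t(\mathcal{H}(x))$; the group law $\Phi_s \circ \Phi_t = \Phi_{st}$ immediately forces $G_s \circ G_t = G_{st}$, and each $G_t$ is a holomorphic bijection with inverse $G_{1/t}$. Extending past the at-most-one-point exception (Picard, plus Casorati--Weierstrass applied to the bijection $G_t$ to remove any isolated singularity), $G_t$ becomes a biholomorphism of $\cplx$ and hence affine: $G_t(w) = \alpha(t)w + \beta(t)$. Solving the cocycle conditions yields $\alpha(t) = t^a$ for some $a \in \RR$ and, when $a \neq 0$, $\beta(t) = c(1 - t^a)$ for some $c \in \cplx$.

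To finish, I would rule out $a \leq 0$: if $a < 0$ then $\mathcal{H}(\Phi_t x)$ is unbounded as $t \to 0^+$ whenever $\mathcal{H}(x) \neq c$, whereas $\Phi_t x \to 0$ forces $\mathcal{H}(\Phi_t x) \to \mathcal{H}(0)$, a contradiction; the case $a = 0$, $\beta(t) = c\log t$ with $c \neq 0$ fails analogously; and $a = c = 0$ forces $\mathcal{H}$ to be constant on $\Phi_t$-orbits, hence entire of weight zero, hence literally constant, contradicting the existence of the foliation. So $a > 0$, and evaluating the functional equation at $x = 0$ yields $c = \mathcal{H}(0)$. Replacing $\mathcal{H}$ by $\widetilde{\mathcal{H}} := \mathcal{H} - \mathcal{H}(0)$ (which has the same level curves) gives $\widetilde{\mathcal{H}} \circ \Phi_t = t^a \widetilde{\mathcal{H}}$; being entire and weighted-homogeneous of positive weight $a$, its power-series expansion only receives contributions from monomials $z_1^\alpha z_2^\beta$ with $\alpha/m_1 + \beta/m_2 = a$, of which there are only finitely many. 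Thus $\widetilde{\mathcal{H}}$ is a polynomial with rational weight $q^* = a = (\alpha m_2 + \beta m_1)/(m_1 m_2)$, and setting $F := \widetilde{\mathcal{H}}$ completes the proof. The main obstacle will be the careful justification that $G_t$ is globally well-defined and extends to a biholomorphism of $\cplx$ --- particularly around critical or omitted values of $\mathcal{H}$, and in case the generic fibre of $\mathcal{H}$ is disconnected; once these points are dispatched, the cocycle analysis and the ruling out of degenerate exponents are routine.
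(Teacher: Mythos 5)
Your approach is correct in outline but takes a genuinely different route from the paper's. Both arguments begin identically: exploit the rank-one structure of $\hess(P)$ (rank $\leq 1$ everywhere because $\LeviDeg{P}=\CC$, rank $\geq 1$ on a dense open set because $P$ is not pluriharmonic) to force the tangent of any curve of harmonicity into the unique null direction of the Levi form, whence the weighted dilations $\dil_t$ permute the leaves. After that the methods diverge. The paper fixes a single well-chosen nonsingular level curve $V_{c^0}$, expands $F=\sum_{q}P_q$ into its $(m_1,m_2)$-homogeneous pieces of weight $q$, pushes the dilation relation onto $V_{c^0}$, and uses the linear independence of the restrictions $P_q|_{V_{c^0}}$ (citing Shafarevich) to compare coefficients in $t$ and conclude that only one weight $q^*$ survives, so that $F=P_{q^*}$ by analytic continuation from the real hypersurface $\bigcup_{t>0}\dil_t(V_{c^0})$. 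You instead treat the induced one-parameter family $G_t$ on the space of level-set labels as a dynamical object, classify it as an affine flow $G_t(w)=t^a(w-c)+c$ via the cocycle relation $G_s\circ G_t=G_{st}$, and rule out $a\leq 0$ by letting $t\to 0^+$. Your route is more conceptual --- it explains \emph{why} a unique weight survives (it is the exponent of a one-parameter affine group) --- but the step you flag as the ``main obstacle'' is genuinely the hard one, and harder than you suggest: for a general entire $\mathcal{H}$ the exceptional labels (critical values of $\mathcal{H}$, and labels whose fibre has several components that $\dil_t$ could scatter across different leaves) need \emph{not} form a discrete set, so Riemann removability together with Picard and Casorati--Weierstrass do not apply out of the box to promote $G_t$ to a biholomorphism of $\cplx$. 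The paper sidesteps this entirely: its $\phi(\cdot,t)$ is never required to be globally defined or even well defined as a map of $\cplx$ --- a single generic $c^0$ with $V_{c^0}$ nonsingular suffices, and the weighted-homogeneous expansion does the rest. Two smaller points to tighten in your version: you need the continuity (in fact real-analyticity) of $t\mapsto G_t(w)$ before solving $\alpha(st)=\alpha(s)\alpha(t)$, so that $\alpha(t)=t^a$ rather than a pathological additive solution --- this follows because $F$ and $\dil_t$ are real-analytic in $(z,t)$ --- and the rationality of $a$ should be noted to follow from the fact that $a=\alpha/m_1+\beta/m_2$ for some nonzero Taylor coefficient $c_{\alpha\beta}$ of $\widetilde{\mathcal{H}}$.
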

\begin{proof} 
Let us first begin by defining $M:=$ the largest positive integer $\mu$ such that there exists some
$f\in\hol(\CC)$ and $f^\mu=\mathcal{H}$ (here, $\mathcal{H}$ is as given by Property~(B)). Define 
$F$ by the relation $F^M=\mathcal{H}$.
Observe that the hypotheses of this lemma continue to hold when $\mathcal{H}$ is replaced by $F$.
\smallskip

Let $\dil_t$ denote the dilations
$\dil_t:(z_1,z_2)\longmapsto(t^{1/m_1}z_1,t^{1/m_2}z_2)$, and define the set
$\mathfrak{S}(P):=\{z\in\CC:\hess(P)(z)=\zerov\}$. Since, by hypothesis, 
${\rm det}\left[\hess(P)\right]\equiv 0$, we have
\[
\mathfrak{S}(P) \ = \ \{z\in\CC:(\Mixdrv{1}{1}P+\Mixdrv{2}{2}P)(z)
			={\rm tr}(\hess(P))(z)=0\}.
\]
As $P$ is not pluriharmonic, ${\rm dim}_{\RR}\left[\mathfrak{S}(P)\right]\leq 3$, and
$\mathfrak{S}(P)$ is a closed subset of $\CC$. Hence, by the open-mapping theorem
\[
W(P) \ := \ F(\CC\setminus\mathfrak{S}(P))\setminus\{0\}
\]
is a non-empty open subset of $\cplx$. Pick any $c\in W(P)$ and set
$V_c:=F^{-1}\{c\}$. Then $V_c\bigcap\mathfrak{S}(P)=\emptyset$; and, in view of the
transformation law for the Levi-form and the fact that $P$ is $(m_1,m_2)$-homogeneous,
$\dil_t(V_c)\bigcap\mathfrak{S}(P)=\emptyset \ \forall t>0$. We now make the following
\smallskip

\noindent{{\bf Claim.} {\em Each $\dil_t(V_c), \ t>0$, is contained in some level set of $F$
($c\in W(P)$ as assumed above).}}

\noindent{To see this we note that by the transformation law for the Levi-form, $P$ is harmonic
along the smooth part of $\dil_t(V_c) \ \forall t>0$. If $\dil_t(V_c)$, for some $t>0$, is not contained
in any level set, then there would exist a non-empty, Zariski-open subset, say $S$, of the curve 
$\dil_t(V_c)$ such that:
\begin{itemize}
\item  for each $\zt\in S$, there is some level set of $F$ passing through $\zt$
that is transverse to $\dil_t(V_c)$ at $\zt$; and
\item owing to the above, $\levi{P}(\zt; \ \bcdot)\equiv 0 \ \forall\zt\in S$.
\end{itemize}
But the second statement above cannot be true because $\dil_t(V_c)\bigcap\mathfrak{S}(P)=\emptyset$.
Hence the claim.}
\smallskip

We now define the following function $\phi:W(P)\times(0,\infty)\lrarw W(P)$ defined by
\[
\phi(c,t) \ := \ \text{the number $b\in\cplx$ such that $F^{-1}\{bc\}\supset\dil_t(V_c)$}.
\]
This $\phi$ is well-defined in view of the Claim above. Define the following two sets
\begin{align}
{\sf Supp}(F) \ &:= \ \left\{\alpha\in\Nat^2:\frac{\partial^{|\alpha|} F}{\partial z^\alpha}(0)\neq 0\right\},
\notag \\
\indx(F) \ &:= \ \left\{\frac{\alpha_1 m_2+\alpha_2 m_1}{m_1 m_2}:(\alpha_1,\alpha_2)\in{\sf Supp}(F)\right\}.
\notag
\end{align}
By construction of $\indx(F)$, we can write
\[
F(z) = \sum_{q\in\indx(F)}P_q(z_1,z_2)
\]
where each $P_q$ is a regrouping of the terms in the Taylor expansion of $F$ around $z=0$ such that
$P_q$ is $(m_1,m_2)$-homogeneous with weight $q$. Pick a $c^0\in W(P)$ such that $V_{c^0}$ is a
nonsingular curve {\em and} $\bigcup_{t>0}\dil_t(V_{c^0})\varsupsetneq V_{c^0}$ (we can do this because a
$V_c$ having these properties is generic as $c$ varies through $W(P)$) and
let $z_0$ be such that $F(z^0)=c^0$. Then:
\begin{align}\label{E:homolevels1}
\sum_{q\in\indx(F)}P_q(t^{1/m_1}z_1,t^{1/m_2}z_2)\
= \ \phi(F(z^0),t)\sum_{q\in\indx(F)}P_q&(z_1,z_2) \\
 \forall&(z_1,z_2)\in V_{c^0} \ \text{and $\forall t>0$.} \notag
\end{align}
The above equation holds for all $z\in V_{c^0}$ due to our above Claim. On the other hand:
\[
\sum_{q\in\indx(F)}P_q(t^{1/m_1}z_1,t^{1/m_2}z_2) \ = \
\sum_{q\in\indx(F)}t^qP_q(z_1,z_2) \ \forall (z_1,z_2)\in V_{c^0} \ \text{and $\forall t>0$.}
\]
This, along with \eqref{E:homolevels1}, gives us
\begin{equation}\label{E:homolevels2}
\sum_{q\in\indx(F)}(\phi(F(z^0),t)-t^q)\left.P_q\right|_{V_{c^0}} \ \equiv \ 0 \;\; \forall t>0.
\end{equation}
Since $V_{c^0}$ is nonsingular, zero is a unique linear combination of the $\left.P_q\right|_{V_{c^0}}$'s
(see, for instance, \S{II.2.1},~Theorem~4, of Shafarevich's
\cite{shafarevich:bagI94}). I.e. we can ``compare coefficients'' in \eqref{E:homolevels2}
to get:
\[
\left.P_q\right|_{V_{c^0}}\not\equiv 0 \ \Longrightarrow \
\phi(F(z^0),t)=t^q \ \forall t>0 \ \text{and $\forall z\in \CC\setminus\mathfrak{S}(P)$.}
\]
Since this holds true as $t$ varies through $\RR_+$, we conclude that there exists a
$q^*\in\indx(F)$ such that $\left.P_q\right|_{V_{c^0}}\equiv 0$ if $q\neq q^*$. Now, consider
the set
$\hyprs:=\bigcup_{t>0}\dil_t(V_{c^0})$. By our choice of $c^0$, $\hyprs$ is a real hypersurface.
Owing to the homogeneity of the $P_q$'s, we have
\[
F|_{\hyprs} \ = \ \sum_{q\in\indx(F)}\left.P_q\right|_{\hyprs} \ = \left.P_{q^*}\right|_{\hyprs}.
\]
But since $F$ and $P_{q^*}$ agree on a real hypersurface, $F\equiv P_{q^*}$. Hence the
result.
\end{proof}
\smallskip

Our next result will play a key role in the proofs pertaining to polynomials having Property~(B).
It is a rephrasing of \cite[Lemma~2.4]{fornaessSibony:cpfwpd89}. Since it is an {\em almost} 
direct rephrasing, we shall not prove this result. Readers are, however, referred to the remark
immediately following this lemma.
\smallskip

\begin{lemma}\label{L:subhBump} Let $U:\cplx\to\RR$ be a real-analytic,
subharmonic, non-harmonic function that is homogeneous of degree $j$. There exist a positive 
constant $C\equiv C(U)$ --- i.e. depending only on $U$ --- and a $2\pi$-periodic
function $h\in\smoo^\infty(\RR)$ such that:
\begin{enumerate}
\item[a)] $0<h(x)\leq 1 \ \forall x\in\RR$.
\item[b)] $\laplc\left(U-\delta|\bcdot|^j h\circ\rg{\bcdot}\right)(z)
\geq \delta C|z|^{j-2} \ \forall z\in\cplx$ 
and $\forall\delta:0<\delta\leq 1$.
(Here $\rg{\bcdot}$ refers to any continuous branch of the argument.)
\end{enumerate}
\end{lemma}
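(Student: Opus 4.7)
The plan is to reduce inequality (b) to a one-dimensional problem on the circle using the homogeneity of $U$, and then construct $h$ by suitably perturbing a positive constant near the zeros of the angular Laplacian of $U$.

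Set $u(\theta) := U(e^{i\theta})$, so $U(re^{i\theta}) = r^j u(\theta)$ by $j$-homogeneity, and $|z|^j h(\rg{z}) = r^j h(\theta)$ under $z = re^{i\theta}$. The polar-coordinate formula for the Laplacian gives, for any smooth $g$,
\[
\laplc\bigl(r^j g(\theta)\bigr) \ = \ r^{j-2}\bigl(j^2 g(\theta) + g''(\theta)\bigr).
\]
Setting $\psi(\theta) := j^2 u(\theta) + u''(\theta)$, the subharmonicity and non-harmonicity of $U$ translate, respectively, to $\psi$ being non-negative, real-analytic, $2\pi$-periodic, and not identically zero. Inequality (b) then becomes
\[
\psi(\theta) - \delta\bigl(j^2 h(\theta) + h''(\theta)\bigr) \ \geq \ \delta\, C \qquad \forall\theta\in\RR,\; \forall\delta\in(0,1].
\]
Since $\psi \geq 0$, this is automatic wherever $j^2 h + h'' \leq -C$; elsewhere, the binding case is $\delta = 1$. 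Thus the task reduces to producing a $2\pi$-periodic $h \in \smoo^{\infty}(\RR)$ with $0 < h \leq 1$ and a constant $C > 0$ such that $j^2 h + h'' \leq \psi - C$ pointwise.

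Because $\psi$ is non-negative, real-analytic and not identically zero, its zero set $Z \subset [0,2\pi)$ is finite, every zero is of finite even order with positive leading coefficient, and $\psi \geq \epsilon_0 > 0$ outside any prescribed open neighbourhood of $Z$. I would therefore take $h$ of the form $h_0 + \sum_{\theta_i \in Z} \eta_i$, where $h_0 \in (0,1)$ is a small positive constant and each $\eta_i$ is a smooth, compactly supported perturbation localized near $\theta_i$, chosen so that $j^2 \eta_i + \eta_i'' \leq -C$ on a neighbourhood of $\theta_i$ while $|\eta_i|$ remains small enough that $h \in (0,1]$ globally. Outside $\bigcup_i \operatorname{supp}(\eta_i)$ one has $h = h_0$, and hence $j^2 h + h'' = j^2 h_0$, which is dominated by $\psi - C$ once $h_0$ and $C$ are chosen small relative to $\epsilon_0$.

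The main obstacle is a genuine tension: the maximum principle forces $j^2 h + h'' \geq j^2 h_{\min} > 0$ at the minimum of any strictly positive periodic $h$, so the angular Laplacian of $h$ cannot be globally negative. One must therefore arrange for the unavoidable positivity of $j^2 h + h''$ to occur exactly where $\psi$ is bounded below, and the strong negativity to occur on neighbourhoods of $Z$. This balancing act is possible precisely because $Z$ is finite and $\psi$ is quantitatively positive off $Z$. Since this construction is essentially the content of \cite[Lemma~2.4]{fornaessSibony:cpfwpd89}, the only additional check required here is uniformity of the estimate in $\delta \in (0,1]$, which is immediate from the reduction to the case $\delta = 1$ above.
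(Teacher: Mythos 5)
The paper offers no proof of this lemma: it is stated as a rephrasing of \cite[Lemma~2.4]{fornaessSibony:cpfwpd89}, to which the authors simply defer (see the surrounding text and the remark that follows). Your reduction in polar coordinates to the angular inequality $j^2 h + h'' \leq \psi - C$, together with the observation that $\psi \geq 0$ lets one pass from $\delta = 1$ to all $\delta \in (0,1]$, correctly accounts for the paper's remark that the universal constant of \cite[Lemma~2.4]{fornaessSibony:cpfwpd89} is absorbed into $C(U)$; since you then defer, as the paper does, to Fornaess--Sibony for the actual construction of $h$, your argument takes essentially the same route as the paper.
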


\begin{remark}
The $\delta>0$ appearing in the above lemma must not be confused for the $\delta$ appearing
in the statement \cite[Lemma~2.4]{fornaessSibony:cpfwpd89}. The latter $\delta$ is a universal
constant which is a component of the constant $C(U)$ in our notation. If we denote the
$\delta$ of \cite[Lemma~2.4]{fornaessSibony:cpfwpd89} by $\delta_{{\rm univ}}$, then our
$C(U)$ is a polynomial function of $\delta_{{\rm univ}}$ and
\[
\text{(in the notation of \cite{fornaessSibony:cpfwpd89})} \;\; \|U\| \ := \ 
\sup_{|z|=1}|U(z)|.
\]
\end{remark}
\medskip

\section{Proofs of the theorems in the homogeneous case}\label{S:proofsHomo}

We begin with the proof of Theorem~\ref{T:homobump}, most of whose ingredients are now available
from the previous section. However, we need one last result, which is derived from 
\cite{Noell:pfpd93}.
\smallskip

\begin{result}[Version of Prop.~4.1 in \cite{Noell:pfpd93}]\label{R:noell}
Let $P$ be a plurisubharmonic polynomial on $\Cn$ that is homogeneous of degree $2k$. Let
$\omega_0$ be a connected component of $\LeviDeg{P}\setminus\{0\}$ having the following
two properties:
\begin{itemize}
\item[a)] There exist closed cones $\clcone_1$ and $\clcone_2$ such that
\[
\omega_0 \subset \ {\rm int}(\clcone_1) \subset \ \clcone_1\setminus\{0\}  
\varsubsetneq \ {\rm int}(\clcone_2),
\]
and such that $\clcone_2\bigcap(\LeviDeg{P}\setminus\overline{\omega_0})=\emptyset$.
\item[b)] $\omega_0$ does not contain any complex-analytic subvarieties of positive dimension
along which $P$ is harmonic.
\end{itemize}
Then, there exist a smooth function $H\geq 0$ that is homogeneous of degree $2k$ and constants
$C,\eps_0>0$, which depend only on $P$, such that  ${\rm int}(\clcone_2)=\{H>0\}$ and such that 
for each $\eps:0<\eps\leq\eps_0$, $\levi{(P-\eps H)}(z;v)\geq C\eps\|z\|^{2(k-1)} \ 
\|v\|^2 \ \forall (z,v)\in\clcone_2\times\Cn$.  
\end{result}
\smallskip

\noindent{The above result is not phrased in precisely these terms in
\cite[Proposition~4.1]{Noell:pfpd93}. The proof of the latter proposition was derived from
a construction pioneered by Diederich and Fornaess in \cite{diederichFornaess:pd:bspef77}.
A close comparison of the proof of \cite[Proposition~4.1]{Noell:pfpd93} with the 
Diederich-Fornaess construction reveals that incorporating the assumption (a) in 
Result~\ref{R:noell} allows us to obtain the above ``localised'' 
version of \cite[Proposition~4.1]{Noell:pfpd93}.
\smallskip
 
\begin{custom}\begin{proof}[{\bf The proof of Theorem~\ref{T:homobump}.}] 
Note that Part~(1) follows simply from Proposition~\ref{P:harmlines}. Hence, let us 
denote the set $\exepc(P)$ by $\exepc(P)=\{L_1,\dots,L_N\}$.
Let $\clcone$ be the closed cone whose existence is guaranteed by Property~(A). By assumption,
we can find a slightly larger cone $\clBcone$ such that
\[
\LeviDeg{P}\setminus(\bigcup_{j=1}^N L_j) \subset \ {\rm int}(\clcone) 
\subset \ \clcone\setminus\{0\} \varsubsetneq \ {\rm int}(\clBcone),
\]
and such that $\clBcone\bigcap(\bigcup_{j=1}^N L_j)=\{0\}$. Hence, in view of
Result~\ref{R:noell}, we can find a smooth function $H_0\geq 0$ that is homogeneous
of degree $2k$, and constants $C_0,\eps_0>0$ such that
\begin{align}
\{z:H_0>0\} \ &= \ {\rm int}(\clBcone), \notag \\
\levi{(P-\delta H_0)}&(z;v) \label{E:leviNoell} \\
&\geq \ C_0\delta\|z\|^{2(k-1)} \ \|v\|^2 \quad 
\forall (z,v)\in\clBcone\times\CC, \ \text{and} \ \forall\delta\in(0,\eps_0).\notag
\end{align}
Without loss of generality, we may assume that each $L_j$ is of the form
$L_j=\{(\zt_jz_2,z_2):z_2\in\cplx\}$. Applying Proposition~\ref{P:conebump} to
\[
\jayP(z_1,z_2) \ := \ P(z_1+\zt_j z_2,z_2)
\]
we can find:
\begin{itemize}
\item a constant $B_1>0$ that depends only on $P$;
\item constants $\sigma_j>0, \ j=1,\dots,N$, that depend only on $P$ and $j$; and
\item functions $H_j\in\smoo^\infty(\CC)$ that are homogeneous of degree $2k$;
\end{itemize}
such that 
\begin{align}\label{E:leviBS}
\levi{(P-\delta H_j)}&(z;v) \notag \\
&\geq \ \delta B_1|z_1-\zt_j z_2|^{2(k-1)}
	\left(|v_1-\zt_jv_2|^2\left(1-\frac{\delta}{2}\right) 
	+ \left|\frac{z_1-\zt_jz_2}{z_2}v_2\right|^2\right)\notag \\
        & \ \forall (z,v)\in\left[\smcone{\zt_j}{\sigma_j}\setminus\{0\}\right]\times\CC \ 
	\text{and} \ \forall\delta:0<\delta\leq 1.\notag
\end{align}
The reader is reminded that $\scone{\zt_j}{\sigma_j}$ denotes an open cone, as
introduced in Section~\ref{S:results}, and that the right-hand side above is finite.
Let $\sig{j}>0, \ j=1,\dots,N$, be so small that
\begin{align}
2\sig{j} \ &\leq \ \sigma_j, \quad j=1,\dots,N,\notag\\ 
(\smcone{\zt_j}{2\sig{j}}\cap S^3)\bigcap(\clcone_*\cap S^3) \ &= \ \emptyset \quad\forall j\leq N,\notag\\
(\smcone{\zt_j}{2\sig{j}}\cap S^3)\bigcap(\smcone{\zt_k}{2\sig{k}}\cap S^3) \ &= \ \emptyset
\quad\text{if $j\neq k$}.
\end{align}
Here, $S^3$ denotes the unit sphere in $\CC$. We introduce these new parameters in order
to patch together all the above ``localised'' bumpings.
\smallskip

Let us now define
\[
V_j \ := \ \smcone{\zt_j}{\sig{j}}\cap S^3, \quad\text{and}\quad
	U_j \ := \ \smcone{\zt_j}{2\sig{j}}\cap S^3.
\]
Let $\chi_j:S^3\lrarw [0,1]$ be a smooth cut-off function such that 
$\chi_j|_{V_j}\equiv 1$ and ${\rm supp}(\chi_j)\subset U_j, \ j=1,\dots,N$. Let us
define $\Psi_j(z):=\chi_j(z/\|z\|) \ \forall z\in\CC\setminus\{0\}$. Finally, let us use
the expression $\Psi_jH_j$ to denote the homogeneous function defined as
\[
\Psi_jH_j(z) \ := \ \begin{cases}
			\Psi_j(z)H_j(z), &\text{if $z\neq 0$,} \\
			0, &\text{if $z=0$.}
			\end{cases}
\]
Note that $\Psi_jH_j\in\smoo^\infty(\CC)$ and is homogeneous of degree $2k$. Let us now 
estimate the Levi-form of $(P-\delta\Psi_jH_j)$ on 
$\scone{\zt_j}{2\sig{j}}\setminus\smcone{\zt_j}{\sig{j}}$. Since, by construction,
$(P-\delta\Psi_jH_j)$ is strictly plurisubharmonic on 
$\scone{\zt_j}{2\sig{j}}\setminus\smcone{\zt_j}{\sig{j}}$, and strict plurisubharmonicity is
an open condition, we infer from continuity and homogeneity:
\begin{multline}\label{E:fact}
\exists\eps\ll 1 \ \text{\em such that $(P-\delta\Psi_jH_j)$ is strictly} \\
	\text{\em plurisubharmonic on $\scone{\zt_j}{\sig{j}+\eps} \ \forall j=1,\dots,N$, and
		for each $\delta:0<\delta\leq 1/2$.}
\end{multline}
For the moment, let us {\em fix} $j\leq N$. Note that, by construction, we can find a
$\beta_j>0$ such that 
\[
(1-\Psi_j)(z) \ \geq \ \beta_j \quad\forall z\in
		\scone{\zt_j}{2\sig{2}}\setminus\smcone{\zt_j}{\sig{j}+\eps}.
\]
Furthermore, since $(\sig{j}+\eps)|z_2|<|z_1-\zt_j z_2|<2\sig{j}|z_2|$ in the
cone $\scone{\zt_j}{2\sig{2}}\setminus\smcone{\zt_j}{\sig{j}+\eps}$, applying this to
\eqref{E:leviBS} gives us small constants $\gamma_j, c_j>0$ such that
\begin{align}
\levi{(P-\delta H_j)}(z;v) \ \geq \ \gamma_j\|z\|^{2(k-1)}\|v\|^2
\quad &\forall z\in\scone{\zt_j}{2\sig{2}}\setminus\smcone{\zt_j}{\sig{j}+\eps}, 
	\label{E:leviest1}\\
	&\forall v\in\CC,  \ \text{and} \ \forall\delta: 0<\delta\leq 1/2;\notag\\
\levi{P}(z;v) \ \geq \ c_j\|z\|^{2(k-1)}\|v\|^2
\quad &\forall (z,v)
	\in(\scone{\zt_j}{2\sig{2}}\setminus\smcone{\zt_j}{\sig{j}+\eps})\times\CC.
	\label{E:leviest2}
\end{align}
From the estimates \eqref{E:leviest1} and \eqref{E:leviest2}, we get
\begin{align}
\levi{(P-\delta\Psi_jH_j)}(z;v) \ &= \ \Psi_j(z)\levi{(P-\delta H_j)}(z;v)+
	(1-\Psi_j)(z)\levi{P}(z;V) \notag\\
	&\qquad -\delta H_j(z)\levi{\Psi_j}(z;v)
		-2\delta\er\left[\sum_{\mu,\nu=1}^2
		\monodrv{\mu}\Psi_j(z)\monodrv{\overline{\nu}}\Psi_k(z)
		v_\mu\overline{v_\nu}\right] \notag\\
&\geq \ \|z\|^{2(k-1)}\|v\|^2\left(\gamma_j\Psi_j(z)+c_j\beta_j\right) \notag \\
&\quad \ -\delta\left(2\left|\sum_{\mu,\nu=1}^2
		\monodrv{\mu}\Psi_j(z)\monodrv{\overline{\nu}}\Psi_k(z)
		v_\mu\overline{v_\nu}\right| + |H_j(z)\levi{\Psi_j}(z;v)|\right) \notag\\
&\qquad\qquad \forall z\in\scone{\zt_j}{2\sig{2}}\setminus\smcone{\zt_j}{\sig{j}+\eps} \
		\text{and} \ \forall v\in\CC. \notag
\end{align}
Finally, we can find constants $K_1, K_2, K_3>0$ and a $\delta_j>0$ that is so small
that, in view of the above calculation, we can make the following estimates:
\begin{align}\label{E:leviPos}
\levi{(P-\delta\Psi_jH_j)}(z;v) \ &\geq \ 
	\|z\|^{2(k-1)}\|v\|^2\left(c_j\beta_j-2\delta K_1\right)-2\delta K_2|H_j(z)|\|v\|^2 \\
	& \geq \ \frac{c_j\beta_j}{2}\|z\|^{2(k-1)}\|v\|^2 \notag \\
	& \geq \ \delta K_3\|z\|^{2(k-1)}\|v\|^2 \notag \\ 
	&\qquad\forall z\in\scone{\zt_j}{2\sig{2}}\setminus\smcone{\zt_j}{\sig{j}+\eps},\notag\\
        &\qquad\forall v\in\CC,  \ \text{and} \ \forall\delta: 0<\delta\leq \delta_j.\notag
\end{align}
Let us now set
\begin{align}
\widetilde{H} \ &:= \ H_0+\sum_{j=1}^N\Psi_jH_j, \notag\\
\delta_0 \ &:= \ \min(\eps_0,\delta_1,\dots,\delta_N). \notag
\end{align}
So far, in view of \eqref{E:fact} and \eqref{E:leviPos}, we have accomplished the following:
\begin{enumerate}
\item[i)] $(P-\delta\widetilde{H})\in\psh(\CC)$, and $(P-\delta\widetilde{H})$ is {\em strictly}
plurisubharmonic on $\CC\setminus\bigcup_{j=1}^N L_j \ \forall\delta\in(0,\delta_0)$.
\item[ii)] $\{z:\widetilde{H}>0\}=
{\rm int}(\clcone_*)\bigcup(\bigcup_{j=1}^N(\scone{\zt_j}{2\sig{j}}\setminus L_j))$.
\end{enumerate}
All we now have to do is make a perturbation of $\widetilde{H}$ to get an $H$ that is
strictly positive where desired. To carry this out, let: 
\begin{align}
W_0 \ &:= \ S^3\bigcap\left({\rm int}(\clcone_*)\bigcup\left(\bigcup_{j=1}^N\scone{\zt_j}{2\sig{j}}
		\right)\right)^{{\sf C}}, \notag\\
W_1 \ &= \ \text{some small $S^3$-neighbourhood of $\overline{W_0}$ such that 
		$(\bigcup_{j=1}^NL_j)\bigcap W_1=\emptyset$}. \notag
\end{align}
Now let $\chi^*:S^3\lrarw [0,\alpha]$ be a smooth cut-off function on $S^3$ such that 
$\chi^*|_{W_0}\equiv\alpha$ and ${\rm supp}(\chi^*)\subset W_1$, where $\alpha>0$ is so
small that if we define
\[
H(z) \ := \ \widetilde{H}(z)+\|z\|^{2k}\chi^*\left(\frac{z}{\|z\|}\right),
\]
then --- in view of (i) above --- Part~(b) of this theorem follows without altering
the conclusion of (i) above when $\widetilde{H}$ is replaced by $H$. Hence (a) follows.
\end{proof}
\end{custom}
\smallskip

Next, we provide:

\begin{custom}\begin{proof}[{\bf The proof of Theorem~\ref{T:homobumpB}.}] Let us first
begin by defining $M:=$ the largest positive integer $\mu$ such that there exists some
$f\in\hol(\CC)$ and $f^\mu=\mathcal{H}$. Define $F$ by the relation $F^M=\mathcal{H}$.
Observe that the hypotheses of Theorem~\ref{T:homobumpB} continue to hold when 
$\mathcal{H}$ is replaced by $F$. 
\smallskip

\noindent{{\bf Step I.} {\em The function $F$ is a homogeneous polynomial}}

\noindent{This is a straightforward application of Lemma~\ref{L:homoImp}. Note that our
preliminary construction of $F$ is precisely the $F$ provided by Lemma~\ref{L:homoImp}
applied to $(m_1,m_2)=(2k,2k)$.}
\smallskip

\noindent{{\bf Step II.} {\em To show that $P$ is constant on each level-set of $F$}}

\noindent{First we note that, without loss of generality, we may assume that
$F|_{z_1=0}\not\equiv 0$. If not, we carry out the following change of coordinates
\begin{align}
Z_1 \ &:= \ z_1-\zt_0z_2 \notag\\
Z_2 \ &:= \ z_2, \notag
\end{align}
where $\zt_0\in\cplx\setminus\{0\}$ is so chosen that
$F|_{z_1=\zt_0z_2}\not\equiv 0$.
If we define
\[
\widetilde{P(}Z_1,Z_2) \ := \ P(Z_1+\zt_0Z_2,Z_2) \quad\text{and}\quad
\widetilde{F(}Z_1,Z_2) \ := \ F(Z_1+\zt_0Z_2,Z_2),
\]
then it is easy to check that
\begin{itemize}
\item $\widetilde{P{ }}$ is harmonic along the smooth part of each level curve of 
$\widetilde{F{ }}$; and
\item $\widetilde{F{ }}|_{Z_1=0}\not\equiv 0$.
\end{itemize}
Hence, we may as well assume that $F$ satisfies the desired condition. Then, by homogeneity
of $F$, $F(0, \ \bcdot)$ is non-constant. By the Fundamental Theorem of Algebra, then
\begin{equation}\label{E:nonempty}
\{z\in\CC:z_1=0\}\bigcap F^{-1}\{c\} \ \neq \ \emptyset\quad\forall c\in\cplx.
\end{equation}
Let us now assume that $P$ is non-constant on $F^{-1}\{c\}$ for some $c\in\cplx$.
The fact that $P$ being non-constant on $F^{-1}\{c\}$ is an open condition in $c\in\cplx$
implies, in conjunction with \eqref{E:nonempty}, that we can find a $c_0$ close to $c$ and
a $w_0\in\cplx$ such that
\begin{itemize}
\item $P$ is non-constant along $F^{-1}\{c_0\}$;
\item the point $q_0=(0,w_0)$ lies on $F^{-1}\{c_0\}$; and
\item $q_0$ is not a singular point of $F^{-1}\{c_0\}$.
\end{itemize}
Then, by construction, there exists an $\eps>0$ and a holomorphic map on the unit disc,
$\Psi=(\psi_1,\psi_2):\Dsc\lrarw\mathbb{B}(q_0;\eps)$ such that $\Psi(0)=q_0$ and $\Psi$ 
parametrises $F^{-1}\{c_0\}\bigcap\mathbb{B}(q_0;\eps)$.} 
\smallskip

Let us adopt the notations from the proof of Proposition~\ref{P:harmlines} and write
\[
P(z,w) \ = \ \sum_{j=M}^{2k}\sum_{\alpha+\beta=j \atop \mu+\nu=2k-j}
                C_{\alpha\beta\mu\nu}z^\alpha\zbar^\beta w^\mu\wbar^\nu.
\]
By hypothesis, the function
\[
v(\xi) \ := \ \sum_{j=M}^{2k}\sum_{\alpha+\beta=j \atop \mu+\nu=2k-j}
                C^j_{\alpha\beta\mu\nu}\psi_1(\xi)^\alpha
				\overline{\psi_1(}\xi)^\beta 
					\psi_2(\xi)^\mu
				\overline{\psi_2(}\xi)^\nu, \quad\xi\in\Dsc,
\]
is harmonic on the unit disc. Since, by hypothesis, $P$ has no pluriharmonic terms,
and the requirement of harmonicity forces $v$ to have only harmonic terms in its Taylor
expansion about $\xi=0$, we have:
\begin{multline}\label{E:harmKey}
v(\xi)  \\ 
= \ \sum_{j=M}^{2k}\sum_{\alpha+\beta=j \atop \mu+\nu=2k-j}
                C^j_{\alpha\beta\mu\nu}\left\{\psi_1(\xi)^\alpha\psi_2(\xi)^\mu
                                \overline{\psi_1(}0)^\beta\overline{\psi_2(}0)^\nu
					+ \psi_1(0)^\alpha\psi_2(0)^\mu
                                \overline{\psi_1(}\xi)^\beta\overline{\psi_2(}\xi)^\nu\right\}.
\end{multline}
However, recall that $\psi_1(0)=0$. In view of \eqref{E:harmKey}, this forces the conclusion
$v\equiv 0$. But this results in a contradiction because, by real-analyticity, 
$v=P\circ\Psi\equiv 0$ would force $P$ to vanish on $F^{-1}\{c_0\}$. This establishes
Step~II.
\smallskip

\noindent{{\bf Step III.} {\em The proof of Part~(1)}}

\noindent{Let us define the function $U:\cplx\lrarw\RR$ as
\[
U(c) \ := \ P(z_1^{(c)},z_2^{(c)}),
\]
where $(z_1^{(c)},z_2^{(c)})$ is any point lying in $F^{-1}\{c\}$. We would be done if
we could show that $U$ is real-analytic. Let us outline our strategy before tackling the details.
The strategy may be summarised as follows:
\begin{itemize}
\item[1)] We shall choose a complex line $\Lam_\tau:=\{(z_1,z_2=\tau z_1):z_1\in\cplx\}$
such that $F|_{\Lam_\tau}$ is non-constant. We can then show that for {\em almost every} 
$c_0\in\cplx$, we can find a function $u^{c_0}$ that is holomorphic in a neighbourhood 
$V^{c_0}\ni c_0$ and parametrises a designated root of the equation $F|_{\Lam_\tau}=c$ 
as $c$ varies through $V^{c_0}$. In other words: 
\begin{align}
u^{c_0}:c \longmapsto (u^{c_0}(c),\tau u^{c_0}(c)) &\in 
\bigcup_{\zt\in\cplx}\left(\Lam_\tau\cap F^{-1}\{\zt\}\right), 
\notag \\
(u^{c_0}(c),\tau u^{c_0}(c)) &\in F^{-1}\{c\} \quad \text{as $c$ varies through $V^{c_0}$.}\notag
\end{align}
\item[2)] Clearly, $U|_{V^{c_0}}=P(u^{c_0},\tau u^{c_0})$. As real-analyticity is a local
property, we would be done if the conclusions of (1) could be established in a neighbourhood of
{\em every} $c_0\in\cplx$. This can be achieved by repeating the above analysis on a different
complex line $\Lam_\eta\neq\Lam_\tau$. Subharmonicity would follow from a Levi-form calculation.
\end{itemize}
The details follow.}
\smallskip

Accordingly, choose any $\tau\in\cplx$ such that 
\[
F_\tau: z \ \longmapsto \ F(z,\tau z) \ \text{\em is a non-constant polynomial.}
\]
Recall that --- given a complex, univariate polynomial $p$ --- the map
\[
\Dis_p(c) \ := \ \text{the discriminant of the polynomial $p(z)-c$}
\]
has the following two properties:
\begin{itemize}
\item[i)] $\Dis_p(c)$ is a complex polynomial in $c$.
\item[ii)] $\Dis_p(c)=0 \ \iff$ the equation $p(z)-c=0$ has repeated roots.
\end{itemize}
The reader is referred to van der Waerden's book \cite{vanderWaerden:Av.I70} for an
exposition on the discriminant. With this information in mind, let us define
\[
\Dis_\tau(c) \ := \ \text{the discriminant of the polynomial $F_\tau(z)-c$.}
\]
Then, by (ii) above, $\Dis_\tau^{-1}\{0\}$ is a finite set, and if 
$c_0\in\cplx\setminus\Dis_\tau^{-1}\{0\}$, then there exists an open disc
$D(c_0;\delta)\subset\cplx\setminus\Dis_\tau^{-1}\{0\}$ such that the equation
$F_\tau(z)-c=0$ has ${\rm deg}(F_\tau)$ simple roots for each $c\in D(c_0;\delta)$.
In fact, we can find a $u^{c_0}\in\hol(D(c_0;\delta))$ such that 
\[
F_\tau(u^{c_0}(c))-c = 0 \quad\forall c\in D(c_0;\delta).
\]
Note that by the above equation and our hypothesis on $P$, we have
\begin{equation}\label{E:you}
U(c) \ = \ P(u^{c_0}(c),\tau u^{c_0}(c)) \quad\forall c\in D(c_0;\delta).
\end{equation}
Since $c_0$ was arbitrarily chosen from $\cplx\setminus\Dis_\tau^{-1}\{0\}$, and 
real-analyticity is a local property, we have just shown that 
$U\in\smoo^\omega(\cplx\setminus\Dis_\tau^{-1}\{0\})$. But we can now repeat the
above argument with some $\eta\neq\tau$, with the property that
\[
\left(\cplx\setminus\Dis_\tau^{-1}\{0\}\right)\bigcup\left(\cplx\setminus\Dis_\eta^{-1}\{0\}\right)
\ = \ \cplx,
\]
replacing $\tau$. We then get a version of equation~\eqref{E:you} with $\eta$ in place
of $\tau$. This establishes that $U\in\smoo^\omega(\cplx)$. By construction, $P=U\circ F$.
Given the homogeneity of $P$ and $F$ (from Step I), it is obvious that $U$ is homogeneous.
Now, performing a Levi-form computation, we get
\begin{align}\label{E:LeviformU}
\levi{P}(z_1,z_2;(V_1,V_2)) \ = \ \frac{1}{4}&\laplc U(F(z_1,z_2)) \\
        &\times \ (V_1 \;\; V_2)
                        \begin{pmatrix}
                        \ |\monodrv{1}F|^2  & \monodrv{1}F \ \monodrv{\overline{2}}\overline{F} \ \\
                        {} & {} \\
                        \ \monodrv{\overline{1}}\overline{F} \ \monodrv{2}F & |\monodrv{2}F|^2 \
                        \end{pmatrix}_{(z_1,z_2)}\begin{pmatrix}
                                                \overline{V}_1 \\
                                               {} \\
                                                \overline{V}_2
                                                \end{pmatrix}. \notag
\end{align}
Since $\levi{P}(z_1,z_2; \ \centerdot)$ must be positive semi-definite at every $(z_1,z_2)\in\CC$,
this forces the conclusion $\laplc U\geq 0$. Hence, $U$ is subharmonic, and Part~(1) is thus
established.
\smallskip
\pagebreak

\noindent{{\bf Step IV.} {\em The proof of Part~(2)}}

\noindent{Write $2\nu:={\rm deg}(U)$ (the degree of $U$ is even due
to subharmonicity). 
We apply Lemma~\ref{L:subhBump} to the subharmonic $U$ to obtain the smooth function $h$ that 
satisfies the conclusions of that lemma. Now define
\[
H(z_1,z_2) \ := \begin{cases}
			|F(z_1,z_2)|^{2\nu}h(\rg{F(z_1,z_2)}), 
				&\text{if $(z_1,z_2)\notin F^{-1}\{0\}$}, \\
			0, &\text{if $(z_1,z_2)\in F^{-1}\{0\}$},
			\end{cases}
\]
where $\rg{\bcdot}$ refers to any continuous branch of the argument.
Now, a Levi-form computation reveals that
\begin{align}\label{E:Leviform(U-H)}
\levi{(P-\delta H)}(z_1,z_2;(V_1,V_2)) \ = \ \frac{1}{4}&|F(z_1,z_2)|^{2(\nu-1)}
\laplc\left(U-\delta|\bcdot|^{2\nu}h\circ\rg{\bcdot}\right)(F(z_1,z_2))\\
        &\times \ (V_1 \;\; V_2)
                        \begin{pmatrix} 
                        \ |\monodrv{1}F|^2  & \monodrv{1}F \ \monodrv{\overline{2}}\overline{F} \ \\
                        {} & {} \\
                        \ \monodrv{\overline{1}}\overline{F} \ \monodrv{2}F & |\monodrv{2}F|^2 \
                        \end{pmatrix}_{(z_1,z_2)}\begin{pmatrix}
                                                \overline{V}_1 \\
                                               {} \\
                                                \overline{V}_2
                                                \end{pmatrix}. \notag 
\end{align}
In view of Lemma~\ref{L:subhBump}/(b), $(P-\delta H)$ is clearly plurisubharmonic 
$\forall\delta\in(0,1)$. Furthermore, note that, by the properties of $h$, 
\[
H \ \geq \ 0 \quad\text{and $ \; H(z_1,z_2)=0 \ \iff \ F(z_1,z_2)=0$}.
\]
This establishes Part~(2).}
\end{proof}
\end{custom}
\medskip

\section{The proof of Theorem~\ref{T:2homo}}\label{S:proofs2homo}

To avoid confusion resulting from subscripts, we shall write $z:=z_1$ and $w:=z_2$.
We shall also adopt several of the conventions and facts that feature in the proof of
Proposition~\ref{P:harmlines}. Accordingly, let us write
\begin{equation}\label{E:expan}
Q(z,w) \ = \ \sum_{\alpha,\beta\geq 0}
                C_{\alpha\beta}z^\alpha\zbar^{2p-\alpha} w^\beta\wbar^{2q-\beta},
\end{equation}
As before, let us consider the complex lines $L^\zt := \{(z=\zt w,w):w\in\cplx\}$ and examine
$Q|_{L^\zt}$. As in Proposition~\ref{P:harmlines}, we write
\[
Q(\zt w,w) \ = \ \sum_{m+n=2(p+q)}\left\{\sum_{\alpha+\beta=m}
                C_{\alpha\beta}\zt^\alpha\ztbar^{2p-\alpha}\right\}w^m\wbar^n \ \equiv \
                \sum_{m+n=2(p+q)}\phi_{mn}(\zt)w^m\wbar^n.
\]
Recall from Proposition~\ref{P:harmlines} that $\phi_{p+q,p+q}$ is a subharmonic function,
$\phi_{p+q,p+q}\geq 0$, and $\phi_{p+q,p+q}\not\equiv 0$. All of this implies that (note that  
$\phi_{p+q,p+q}$ is homogeneous)
\[
0 \ < \ \int_0^{2\pi}\phi_{p+q,p+q}(e^{i\tht})d\tht \ = \ C_{pq}.
\]
We have just concluded that in the expansion \ref{E:expan}, the term $|z|^{2p}|w|^{2q}$ occurs 
with a positive coefficient. Let us thus decompose $Q$ as
\[
Q(z,w) \ = \ C_{pq}|z|^{2p}|w|^{2q} + R(z,w) \ \equiv \ A(z,w)+R(z,w).
\]
Note that $A$ is harmonic along the varieties $V_c:=\{(z,w)\in\CC:z^pw^q=c\}$. Since, generically
in $V_c$, $T^{\cplx}_{(z,w)}(V_c)=\spn_{\cplx}[(qz,-pw)]$, we have
\begin{equation}\label{E:flat}
\levi{A}((z,w); v) \ = \ 0 \quad\forall v\in\spn_{\cplx}[(qz,-pw)] \ \text{and} \
                                \forall (z,w)\in\CC.
\end{equation}
In other words, equation~\eqref{E:flat} holds true for every $(z,w)\in\CC$, independent of the
variety $V_c$ to which $(z,w)$ belongs. By plurisubharmonicity of $Q$, we infer that
\begin{equation}\label{E:pshR}
\levi{R}((z,w); (qz,-pw)) \ \geq \ 0 \quad\forall(z,w)\in\CC.
\end{equation}
\smallskip
                
Let us now write $z=re^{i\tht}$ and $w=se^{i\tau}$. In this notation, we get
\begin{align}
R(z,w) \ &= \ |z|^{2p}|w|^{2q}T(\tht,\tau), \notag \\
\text{where}\quad T(\tht,\tau) \ &:= \
\sum_{(\alpha,\beta)\neq(p,q)}C_{\alpha\beta}e^{i(2\alpha-2p)\tht}e^{i(2\beta-2q)\tau}.\label{E:tee}
\end{align}
It is a routine matter to check that
\begin{align}
4 \ \mixderiv{z}{z}{{}} \ &= \ \deriv{2}{{}}{r^2}+\frac{1}{r}\deriv{{}}{{}}{r}+
                                \frac{1}{r^2}\deriv{2}{{}}{\tht^2} \notag \\
4 \ \mixderiv{w}{w}{{}} \ &= \ \deriv{2}{{}}{s^2}+\frac{1}{s}\deriv{{}}{{}}{s}+
                                \frac{1}{s^2}\deriv{2}{{}}{\tau^2} \notag \\
4 \ \mixderiv{z}{w} \ &= \ e^{i(\tau-\tht)}\left[\mixRderiv{r}{s}{{}}+
                        \frac{1}{rs} \ \mixRderiv{\tht}{\tau}{{}} - i\left\{
                        \frac{1}{r} \ \mixRderiv{\tht}{s}{{}}-\frac{1}{s} \ \mixRderiv{r}{\tau}{{}}
                        \right\}\right] \notag
\end{align}
Using these differential operators in the inequality \eqref{E:pshR} gives us
\begin{align} 
r&^{2p}s^{2q} \notag\\
&\times[q \; -p]
                        \begin{bmatrix}
                        \ 4p^2T+T_{\tht\tht}  & 4pqT+T_{\tht\tau}+i(2pT_\tau-2qT_\tht) \ \\
                        {} & {} \\
                        \ 4pqT+T_{\tht\tau}-i(2pT_\tau-2qT_\tht) & 4q^2T+T_{\tau\tau} \
                        \end{bmatrix}\begin{bmatrix}
                                                q \\
                                               {} \\
                                               -p
                                                \end{bmatrix} \notag\\
&\geq \ 0 \quad \forall r,s\geq 0, \ \forall(\tht,\tau)\in[-\pi,\pi)\times[-\pi,\pi).\notag
\end{align}
Simplifying the above gives us
\begin{align}
(q^2T_{\tht\tht}&-2pqT_{\tht\tau}+p^2T_{\tau\tau})(\tht,\tau) \notag \\
&= \ \left(q\deriv{{}}{{}}{\tht}-p\deriv{{}}{{}}{\tau}\right)
                \left(q\deriv{{}}{{}}{\tht}-p\deriv{{}}{{}}{\tau}\right)T(\tht,\tau) \ \geq \ 0
\quad\forall(\tht,\tau)\in[-\pi,\pi)\times[-\pi,\pi).\notag
\end{align}
The above inequality simply tells us that for every line on the $\tht\tau$-plane having
tangent vector $(q,-p)$, i.e. for every line $\ell^C:=\{(\tht,\tau)\in\RR^2:p\tht+q\tau=C\}$,
\[
T|_{\ell^C} \ \text{\em is convex for each $C\in\RR$}.
\]
However, the definition of $T$ in \eqref{E:tee} above reveals that $T$ is a real-analytic function
as well as $2\pi$-periodic. For such a $T$ to be convex, necessarily
\[
T|_{\ell^C} \ \equiv \ \text{const.} \;\; \text{\em for each line $\ell^C\subset\RR^2$}.
\]
Hence, $T$ must have the form $T(\tht,\tau)=G(p\tht+q\tau)$, where $G$ is a periodic function.
This means that $T$ must have the form
\begin{equation}\label{E:teekey}
T(\tht,\tau) \ = \ \sum_{M\in\mathfrak{F}}C_Me^{iM(p\tht+q\tau)}, \quad C_M\neq 0 \
                                                                \forall M\in\mathfrak{F},
\end{equation}
where $\mathfrak{F}\subset\zahl$ is a finite subset of integers. Comparing \eqref{E:tee} with
\eqref{E:teekey}, we infer that
\begin{equation}\label{E:indices}
C_{\alpha\beta}\neq 0 \ \Longrightarrow \ \exists M\in\mathfrak{F} \ \text{such that} \
                \frac{2\alpha-2p}{p} = M = \frac{2\beta-2q}{q}.
\end{equation}
If we define $d:=\gcd(\alpha:C_{\alpha\beta}\neq 0)$, we can immediately infer the following
facts:
\begin{itemize}
\item Since $R$ is real-valued,
$C_{\alpha,\beta}\neq 0\Longrightarrow C_{2p-\alpha,2q-\beta}\neq 0$ --- whence  
$d|(2p-\alpha)$ for any $\alpha$ such that $C_{\alpha\beta}\neq 0$; and
\item Owing to \eqref{E:indices}
\[
\beta_0\in\{\beta:C_{\alpha\beta}\neq 0\} \ \iff \ \exists\alpha_0\in
                        \{\alpha:C_{\alpha\beta}\neq 0\} \ \text{such that $\beta_0=q\alpha_0/p$}.
\]
\end{itemize}
Therefore $D:=\gcd(\beta:C_{\alpha\beta}\neq 0)=qd/p$, whence we can find a real-valued polynomial
$r$ that is homogeneous of degree $2p/d$ such that $R(z,w)=r(z^dw^D)$. Now set
\[
U(\xi) \ := \ C_{pq}|\xi|^{2p/d} + r(\xi) \quad\forall\xi\in\cplx.
\]
Clearly, $Q(z,w)=U(z^dw^D)$. We compute the Levi-form of $Q$ one last time. In the process, we
get
\begin{align}
\levi{Q}&((z,w); v) \notag \\
&= \ U_{\xi\bar{\xi}}(z^dw^D)|z|^{2(d-1)}|w|^{2(D-1)}
                                \times(v_1 \;\; v_2)
                                \begin{pmatrix}
                                \ d^2|w|^2 & Dd\zbar w \ \\
                                {} & {} \\
                                \ Ddz\wbar & D^2|z|^2\
                                \end{pmatrix} \begin{pmatrix}
                                                \ \overline{v_1} \ \\
                                                {} \\
                                                \ \overline{v_2} \
                                                \end{pmatrix} \ \geq \ 0 \notag\\
& \ \forall(z,w)\in\CC, \ \forall v\in\CC.\notag
\end{align}
Hence,
\[
(\laplc U)(z^dw^D) \ \geq \ 0 \quad\forall (z,w)\in\CC.
\]
Since $z^dw^D$ attains every value in $\cplx$ as $(z,w)$ varies through $\CC$, we infer that
$\laplc U\geq 0$, i.e. that $U$ is subharmonic. This final fact completes the proof. \qed
\medskip

\section{Proofs of the Main Theorems}

We are now ready to provide a proof of Main Theorem~\ref{MT:homobump}. The basic idea --- i.e. of 
examining the pullback of $P$ by a suitable proper holomorphic map such that the pullback is
homogeneous --- is a simple one. The following argument provides the details. 
\smallskip

\begin{custom}\begin{proof}[{\bf The proof of Main Theorem~\ref{MT:homobump}.}] 
Define $K:=\lcm(m_1,m_2)$ (i.e. the least common multiple of $m_1$ and $m_2$) and write 
$\sigma_j:=K/m_j, \ j=1,2.$ 
Define the proper holomorphic map $\Psi:\CC\lrarw\CC$ by $\Psi(z_1,z_2):=(z_1^{\sigma_1},z_2^{\sigma_2})$,
and write $Q=P\circ\Psi$. Fix a point $(z_1^0,z_2^0)\in\CC\setminus\{(z_1,z_2):z_1z_2=0\}$. Then,
there exist neighbourhoods $U^j\ni z_j^0$ such that the functions $(\xi\mapsto\xi^{\sigma_j})|_{U^j}$ are
injective, $j=1,2$. Therefore $\Psi|_{U^1\times U^2}$ is a biholomorphism, whence 
$Q|_{U^1\times U^2}\in\psh(U_1\times U_2)$. Since plurisubharmonicity is a local property, we 
infer that $Q\in\psh(\CC\setminus\{(z_1,z_2):z_1z_2=0\})$. Finally, since $Q$ is smooth and 
$\{(z_1,z_2):z_1z_2=0\}$ is a pluripolar set, we infer that $Q\in\psh(\CC)$. Furthermore:
\[
Q(t^{1/K}z_1,t^{1/K}z_2) \ = \ P((t^{1/K}z_1)^{\sigma_1},(t^{1/K}z_2)^{\sigma_2}) 
\ = \ tQ(z_1,z_2) \;\; \forall t>0, 
\]
whence $Q$ is a plurisubharmonic polynomial that is homogeneous of degree $K$. By hypothesis, $Q$
has no pluriharmonic terms. Furthermore, we observe that
\[
\frac{m_1}{\gcd(m_1,m_2)} \ = \ \sigma_2, \quad \frac{m_2}{\gcd(m_1,m_2)} \ = \ \sigma_1,
\]
and hence note that for any $\zt\in\cplx$ such that 
$\{(w_1,w_2):w_1^{\sigma_2}=\zt w_2^{\sigma_1}\}\in\exepc(P)$, $Q$ is forced to be harmonic along
each of the complex lines that make up the set
\[
\cplxlns(\zt) \ := \ \bigcup_{l=0}^{\sigma_1\sigma_2-1}
\left\{(z_1,z_2):z_1=|\zt|^{1/\sigma_1\sigma_2}\exp\left(\frac{2\pi il+i{\sf 
Arg}(\zt)}
{\sigma_1\sigma_2}\right)z_2\right\}
\]
(here ${\sf Arg}$ denotes some branch of the argument). But since, by Proposition~\ref{P:harmlines}, 
there are only finitely many complex lines along which $Q$ can be harmonic, this implies Part~(1) of
our theorem.
\smallskip

Now, let $\zt_1,\dots,\zt_N$ be as in Part~(1) of the statement of this theorem. Suppose there is some
$z^0\neq 0$ and a germ of a complex variety $V^0$ at $z^0$ such that
\begin{align}\label{E:assumHarm}
V^0 \ \nsubseteq \ \cplxlns(\zt_j) \;\;  &\forall j=1,\dots, N, \ \text{and,} \\
V^0 \ \nsubseteq \ \{(z_1,z_2):z_2=0\} \;\; &\text{if $\{(z_1,z_2):z_2=0\}\in\exepc(Q)$.}\notag
\end{align}
By assumption, we have the following
situations for $V^0$:
\smallskip

\noindent{{\bf Case (i)} $V^0\nsubseteq\{(z_1,z_2):z_1z_2=0\}$}

\noindent{In this case, we can select a domain $\OM\subset\CC$ such that
\begin{itemize}
\item $\OM\bigcap\{(z_1,z_2):z_1z_2=0\}=\emptyset$ and $\Psi|_\OM$ is a biholomorphism;
\item $\OM\bigcap V^0$ is a smooth subvariety of $\OM$; and
\item we can find a regular parametrisation $\varphi=(\varphi_1,\varphi_2):
\mathbb{D}\lrarw(V^0\bigcap\OM)$
of $V^0\bigcap\OM$.
\end{itemize}
Here, $\mathbb{D}$ denotes the open unit disc in $\cplx$. We now compute that
\begin{align}
\laplc(P\circ\Psi\circ\varphi)(\xi) \ &= \ 
	\levi{P}(\Psi\circ\varphi(\xi);\Psi_*\left|_{\varphi(\xi)}\right.(\varphi^\prime(\xi))) \notag\\
&= \ \levi{Q}\left(\varphi(\xi);\left(\Psi^{-1}\right)_*\left|_{\Psi\circ\varphi(\xi)}\right.
\left\{
\Psi_*\left|_{\varphi(\xi)}\right.(\varphi^\prime(\xi))\right\}\right) \notag\\
&= \ \laplc\left(Q|_{V^0\cap\OM}\right) \ = \ 0,
\end{align}
i.e., we conclude that $P$ is harmonic along $\Psi(V^0\bigcap\OM)$. Yet, by the assumption
\eqref{E:assumHarm}, $\Psi(V^0\bigcap\OM)$ is not contained in any curve belonging to
$\exepc(P)$. But this contradicts the hypothesis that $P$ has Property~(A), whence this
case cannot arise.}
\smallskip

\noindent{{\bf Case (ii)} {\em Either $V^0\subset\{(z_1,z_2):z_1=0\}$ or 
$V^0\subset\{(z_1,z_2):z_2=0\}$.}}

\noindent{A {\em much} simpler variant of the above argument shows us that
these cases will not arise depending on whether $\{(z_1,z_2):z_1=0\}\notin\exepc(Q)$,
or $\{(z_1,z_2):z_2=0\}\notin\exepc(Q)$, respectively.} 
\smallskip

We have therefore established the following fact:
\begin{align}\label{E:QFact}
Q \ \text{\em is a plurisubharmonic polynomial that is homogeneous of degree $K$, and}\\
\mathfrak{C}(Q) \ = \ \text{\em the set of all complex lines belonging to 
$\exepc(Q)$.}\notag
\end{align}
In fact, in view of the above fact, it is completely routine to infer that
\[
\text{\em $P$ has Property~(A)} \ \Longrightarrow \ \text{\em $Q$ has Property~(A)}.
\]
\smallskip

Now consider the unitary transformations $R^{lm}:(z_1,z_2)\lrarw (e^{2\pi il/\sigma_1}z_1,e^{2\pi 
im/\sigma_2}z_2)$ and compute: 
\begin{align} 
\levi{Q}(R^{lm}(z);R^{lm}(V)) &= \levi{P\circ\Psi}\left(R^{lm}(z);
		\Psi_*\left|_{R^{lm}(z))}\right.(R^{ln}V)\right) \notag \\
	&=\ \left(\sigma_1z_1^{\sigma_1-1}e^{-2\pi il/\sigma_1}(R^{lm}V)_1 \;\;\;
	\sigma_2z_2^{\sigma_2-1}e^{-2\pi im/\sigma_2}(R^{lm}V)_2\right) \notag\\
&\qquad\quad\times \hess(P)|_{\Psi(R^{lm}(z))} \ 
	\begin{pmatrix}
	\sigma_1\zbar_1^{\sigma_1-1}e^{2\pi il/\sigma_1}\overline{(R^{lm}V)}_1 \\
	\sigma_2\zbar_2^{\sigma_2-1}e^{2\pi im/\sigma_2}\overline{(R^{lm}V)}_2
	\end{pmatrix} \notag \\
&= \ \levi{P}\left(\Psi(z);\Psi_*\left|_z\right.(V)\right) \notag \\
&= \ \levi{Q}(z;V). \label{E:levimatch}
\end{align}
So, if we define $\mathfrak{N}_Q(z)$ to be the null-space of $\levi{Q}(z; \ \bcdot)$, then
the computation \eqref{E:levimatch} reveals that
\begin{multline}\label{E:rot}
z\in\LeviDeg{Q} \ \text{and} \ V\in\mathfrak{N}_Q(z) \\
\iff \ R^{lm}(z)\in\LeviDeg{Q} \ \text{and} \ R^{lm}(V)\in\mathfrak{N}_Q(R^{lm}(z)) \ 
\forall l,m:1\leq l\leq\sigma_1, \ 1\leq m\leq\sigma_2.
\end{multline}  
In particular \eqref{E:rot} implies that if, in the notation borrowed from the proof of 
Theorem~\ref{T:homobump}, we can find a constant $\alpha_j>0$ and a $H_j\in\smoo^\infty(\CC)$ that
is homogeneous of degree $K$ such that
\begin{itemize}
\item $H_j=0$ on $L_j:=\{z:z_1=|\zt_j|^{1/\sigma_1\sigma_2}\exp(i{\sf Arg}(\zt_j)/\sigma_1\sigma_2)z_2\}$,
\item $(Q-\delta H_j)\in{\sf spsh}[\scone{|\zt_j|^{1/\sigma_1\sigma_2}\exp(i{\sf Arg}(\zt_j)/\sigma_1\sigma_2)}
{\alpha_j}\setminus L_j]$ for each $\delta:0<\delta\leq 1$,
\end{itemize}
then the above remains true with
\begin{itemize}
\item $H_j$ replaced by $H^{(lm)}_j:=H_j\circ(R^{lm})^{-1}$,
\item $L_j$ replaced by
\[
L^{(lm)}_j \ := \ \left\{z:z_1=|\zt_j|^{1/\sigma_1\sigma_2}\exp\left(\frac{2\pi i(\sigma_1m-\sigma_2l)
+i{\sf Arg}(\zt_j)}{\sigma_1\sigma_2}\right)z_2\right\},
\]
\item the cone $\scone{|\zt_j|^{1/\sigma_1\sigma_2}\exp(i{\sf Arg}(\zt_j)/\sigma_1\sigma_2)}{\alpha_j}$
is replaced by its image under $R^{lm}$,
\end{itemize}
for any $l,m:1\leq 1\leq\sigma_1, \ 1\leq m\leq\sigma_2.$
\smallskip

Since $Q$ has Property~(A), Result~\ref{R:noell} is applicable. A careful examination of its proof
reveals that Noell's construction of the bumping is local. Then, in view of \eqref{E:rot} and the
preceding discussion, we can --- by selecting our cut-off functions in Theorem~\ref{T:homobump} 
to be equivariant with respect to $R^{lm}$ --- construct our bumping 
$H$ (of the polynomial $Q$) to have the property
\begin{align}\label{E:propH}
H(z_1,z_2) \ = \ H(e^{2\pi il/\sigma_1}z_1,e^{2\pi im/\sigma_2}z_2) \ &\forall z\in\CC \ \text{and} \\
		&\forall l,m:1\leq 1\leq\sigma_1, \ 1\leq m\leq\sigma_2.\notag
\end{align}
Now define
\begin{multline}
G(z) \\
:= \ \frac{1}{\sigma_1\sigma_2}\sum_{j=1}^{\sigma_1}\sum_{k=1}^{\sigma_2}
		H\left(|z_1|^{1/\sigma_1}\exp\left(\frac{2\pi ij+i{\sf Arg}(z_1)}{\sigma_1}\right),
	|z_2|^{1/\sigma_2}\exp\left(\frac{2\pi ik+i{\sf Arg}(z_2)}{\sigma_2}\right)\right).\notag
\end{multline}
Observe, however, that by the definition of $Q$, $P$ satisfies
\begin{multline}\label{E:defP}
P(z) \\ 
= \ \frac{1}{\sigma_1\sigma_2}\sum_{j=1}^{\sigma_1}\sum_{k=1}^{\sigma_2}
                Q\left(|z_1|^{1/\sigma_1}\exp\left(\frac{2\pi ij+i{\sf Arg}(z_1)}{\sigma_1}\right),
        |z_2|^{1/\sigma_2}\exp\left(\frac{2\pi ik+i{\sf Arg}(z_2)}{\sigma_2}\right)\right).
\end{multline}
Let $\delta_0>0$ be as given by Theorem~\ref{T:homobump} applied to $Q$. Now, as in the beginning
of this proof, fix $(z_1^0,z_2^0)\in\CC\setminus\{(z_1,z_2):z_1z_2=0\}$ and let 
$U^j\ni z_j^0, \ j=1,2$, be neighbourhooods such that $(\xi\mapsto\xi^{\sigma_j})|_{U^j}$ are
injective and such that $(U^1\times U^2)\bigcap\{(z_1,z_2):z_1z_2=0\}=\emptyset$. Write
$V^1\times V^2:=\Psi(U^1\times U^2)$. Note that
by the definition of $G$, and by \eqref{E:propH} and \eqref{E:defP}, we have
\[
(P-\delta G)|_{V^1\times V^2} \ = \ (Q-\delta H)\circ\left(\Psi|_{U^1\times U^2}\right)^{-1}.
\]
Then, whenever $0<\delta\leq\delta_0$, we have the Levi-form computation
\begin{align}
\levi{(P-\delta G)}(w;V) \ &=  \ \levi{(Q-\delta H)}\left((\Psi|_{U^1\times U^2})^{-1}(w);
					\left((\Psi|_{U^1\times U^2})^{-1}\right)_*\left|_w\right.
										(V)\right)\notag\\
&\geq \ 0 \quad\forall w\in V^1\times V^2, \ \forall V\in\CC. \notag
\end{align}
The above argument establishes that whenever $0<\delta\leq\delta_0$, 
$(P-\delta G)\in\psh[\CC\setminus\{w:w_1w_2=0\}]$. Since $(P-\delta G)\in\smoo^2(\CC)$, we infer
that $(P-\delta G)\in\psh(\CC)$ by exactly the same argument as in the first paragraph of this
proof. Finally, given the relationship between the sets $\bigcup_{j=1}^N\cplxlns(\zt_j)$ and 
$\bigcup_{C\in\exepc(P)}C$, Part~(2) follows.
\end{proof} 
\end{custom} 
\smallskip

\begin{custom}\begin{proof}[{\bf The proof of Main Theorem~\ref{MT:homobumpB}.}]
We will re-use the ideas in the preceding proof, but we shall be brief.
Let $\mathcal{H}$ be as in the hypothesis of the theorem and, as before, define $M:=$ the 
largest positive integer $\mu$ such that there exists some
$g\in\hol(\CC)$ and $g^\mu=\mathcal{H}$. Define $F$ by the relation $F^M=\mathcal{H}$.
Our hypotheses continue to hold when
$\mathcal{H}$ is replaced by $F$ and, by Lemma~\ref{L:homoImp}, $F$ is $(m_1,m_2)$-homogeneous.
Let $K$ (i.e. the least common multiple of $m_1$ and $m_2$), $\sigma_1$, $\sigma_2$, $Q$, and the
proper holomorphic map $\Psi:\CC\lrarw\CC$ be {\em exactly} as in the proof of 
Main Theorem~\ref{MT:homobump}. We recall, in particular, that:
\begin{align}
\Psi(z_1,z_2)\ &:= \ (z_1^{\sigma_1},z_2^{\sigma_2}), \notag \\
Q \ &:= \ P\circ\Psi. \notag 
\end{align}
And as before, $Q$ is homogeneous of degree $K$.
\smallskip

Furthermore, if we define $f:=F\circ\Psi$, we get:
\begin{align}
(z_1,z_2) &\in f^{-1}\{c\} \notag\\
\Rightarrow \;\; \Psi^{-1}\{(z_1,z_2)\} &\subset F^{-1}\{c\} \notag \\
\Rightarrow & \;\; \text{$Q$ is harmonic along the smooth part of $f^{-1}\{c\}$.}\notag
\end{align}
Furthermore, we leave the reader to verify that $Q$ has no pluriharmonic terms. Therefore,
by applying Theorem~\ref{T:homobumpB}, we obtain a homogeneous, subharmonic polynomial 
$U$ such that $Q=U\circ f$. Let us write $2\nu:={\rm deg}(U)$. Recall that by applying
Lemma~\ref{L:subhBump} to this $U$ to obtain the $h$ as stated in that lemma, and defining
$\widetilde{\eh}(z):=|z|^{{\rm deg}(U)}h(\rg{z})$, we get
\begin{align}\label{E:homopsh}
P-\delta(\widetilde{\eh}\circ f) \ \in \ \psh(\CC) \quad\forall\delta: 0<\delta\leq 1.
\end{align}
(The above $\widetilde{\eh}\circ f$ is precisely what we called $H$ in Theorem~\ref{T:homobumpB}.)
It is clear that $P=U\circ F$. This establishes Part~(1) of the theorem. The fact that $F$ is
$(m_1,m_2)$-homogeneous {\em with weight $1/2\nu$} follows from degree considerations; since
$P$ is $(m_1,m_2)$-homogeneous, $F$ must be homogeneous with weight $1/{\rm deg}(U)=1/2\nu$.
\smallskip

Now define
\[
G(z_1,z_2) \ := \ \widetilde{\eh}\circ F(z_1,z_2) \quad\forall (z_1,z_2)\in\CC.
\]
By \eqref{E:homopsh}, and by a repetition of the argument in the second half of the last
paragraph of the proof of Main Theorem~\ref{MT:homobump}, we infer
\[
Q-\delta G \ \in \ \psh(\CC) \quad\forall\delta: 0<\delta\leq 1.
\]
\end{proof}
\end{custom}
\medskip

\end{document}